\documentclass[11pt,twoside]{article}

\setlength{\textwidth}{160mm} \setlength{\textheight}{210mm}
\setlength{\parindent}{8mm} \frenchspacing
\setlength{\oddsidemargin}{0pt} \setlength{\evensidemargin}{0pt}
\thispagestyle{empty}
\usepackage{mathrsfs,amsfonts,amsmath,amssymb}
\usepackage{latexsym}
\pagestyle{myheadings}
\newtheorem{satz}{Theorem}

\newtheorem{theorem}[satz]{Theorem}
\newtheorem{conjecture}[satz]{Conjecture}
\newtheorem{lemma}[satz]{Lemma}

\newtheorem{corollary}[satz]{Corollary}

\def\sbeq{\subseteq}

\def\Z{\mathbb {Z}}
\def\F{\mathbb {F}}
\def\E{\mathsf{E}}

\def\P{{\cal P}}

\def\d{\delta}

\def\le{\leqslant}
\def\ge{\geqslant}
\def\x{{\overline x}}
\def\v{{\overline v}}
\def\y{{\overline y}}
\def\z{{\overline z}}
\def\_phi{\varphi}

\def\FF{\widehat}

\def\ov{\overline}

\def\f{{\mathbb F}}

\def\F{\mathbb{F}}

\def\Fp{\mathbb{\F}_p}

\newcommand{\abs}[1]{\left| {#1} \right|}

\author{Tomasz Schoen\footnote{The author is supported by National Science Centre, Poland grant 2019/35/B/ST1/00264.},    
Ilya D. Shkredov\footnote{The author is supported by the grant of the Russian Government N 075-15-2019-1926.}}
\title{\bf  Character sums estimates  and an application to a problem of Balog
}
\date{}
\begin{document}
	\maketitle

\begin{abstract}
	We prove   new bounds for sums of multiplicative characters over sums of set with small doubling and  applying this result we break the square--root barrier in a problem of Balog concerning products of differences in a  field of prime order.   
\end{abstract}

\bigskip  

\centerline{\sc 1. Introduction}
\bigskip

For a  prime number $p$ let  $\F_p$ be the prime field and let $\chi$ be a nontrivial multiplicative character modulo $p$.
We will deal with  a problem of estimating the exponential sum of the form 
\begin{equation}\label{f:def_sum}
\sum_{a\in A,\, b\in B}\chi(a+b) \,,
\end{equation}
where $A,B$ are arbitrary subsets of the field  $\F_p$.
Such exponential sums  were studied by numerous authors, see e.g. \cite{Chang}, \cite{DE1}, \cite{Hanson}, \cite{Kar}--\cite{Kar2}, \cite{SV}, \cite{Volostnov_E3}. 
One of the most important conjecture concerning character sums  is {\it the  Paley graph conjecture}, see  \cite{Chang}.
\begin{conjecture}
	For every  $\delta>0$  there is $\tau=\tau(\delta)>0$ such that for every prime number $p>p(\tau)$ and for  any set $A, B\sbeq \Fp$ with 
	$\abs{A}>p^{\delta}$ and $\abs{B}>p^\delta$ we have 
	\begin{equation}
	\Big|\sum_{a\in A,\, b\in B}\chi(a+b)\Big|<p^{-\tau}\abs{A}\abs{B} \,.
	\label{Paley}\end{equation}
\end{conjecture}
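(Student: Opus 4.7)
The plan is to use the abstract's promised character-sum estimate for sets of small doubling as the ``randomness'' input in a structure-versus-randomness dichotomy. Since the Paley graph conjecture is a famously open problem, what I outline below is an attack strategy rather than a complete argument, and I will flag the precise step that has so far resisted all attempts.

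Assume for contradiction that the sum in \eqref{Paley} has modulus at least $p^{-\tau}|A||B|$. First, by two rounds of Cauchy--Schwarz applied to the inner and outer sums (the standard $U^2$-type unfolding), one converts this into a lower bound on the number of solutions of $a_1+b_1=a_2+b_2$, $a_i\in A$, $b_i\in B$, carrying prescribed character relations; equivalently, a shifted slice of $A$ (or of $B$) has anomalously large additive energy. Second, apply a Balog--Szemer\'edi--Gowers refinement to extract subsets $A'\sbeq A$, $B'\sbeq B$ with $|A'|\gs \tau^{O(1)}|A|$ and $|B'|\gs \tau^{O(1)}|B|$ such that $A'$ has doubling $|A'+A'|\le K|A'|$ for some $K=K(\tau)$.

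Third, feed $(A',B')$ into the small-doubling character-sum bound announced in the paper, obtaining a saving of $p^{-\tau'}$ with $\tau'=\tau'(\delta,K)>0$. Choosing $\tau$ much smaller than $\tau'$ and tracing the refinement back to $(A,B)$ would then contradict the assumed lower bound, yielding \eqref{Paley}.

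The main obstacle is quantitative calibration, and it is where the scheme breaks. Every known small-doubling character-sum theorem either requires $|A|\gs p^{1/4+\eta}$ (inherited from Burgess's bound and its descendants) or tolerates only $K$ growing very slowly in $|A|$; Balog--Szemer\'edi--Gowers, on the other hand, inflates $K$ to a polynomial in $1/\tau$ while shrinking the sets by the same polynomial factor. Making the two regimes overlap uniformly in $\delta$, so that $\tau>0$ can be chosen purely as a function of $\delta$ for \emph{arbitrarily} small $\delta>0$, is precisely the point that has defied resolution for decades. Bypassing it would require a genuinely new input beyond what the outlined combination provides, which is why the paper under consideration only claims to break the square-root barrier in Balog's problem rather than to resolve \eqref{Paley} itself.
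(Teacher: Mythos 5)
This is the Paley graph conjecture, which the paper states as an open \emph{conjecture} and does not attempt to prove; there is therefore no ``paper's own proof'' to compare against, and you are right to treat this as a strategy sketch rather than a proof. Your honesty about the status is correct and appropriate.

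That said, the sketch has a gap that sits earlier in the pipeline than the quantitative calibration issue you flag. In your first step you assert that two applications of Cauchy--Schwarz to $\sum_{a,b}\chi(a+b)$ convert a lower bound on the character sum into ``anomalously large additive energy'' of a shifted slice of $A$ or $B$. That passage is not available: after two rounds of Cauchy--Schwarz one is left with a fourfold sum carrying the \emph{multiplicative} weight $\chi\bigl((a+b)(a'+b')/(a+b')(a'+b)\bigr)$, i.e.\ a sum over cross-ratios, not an unweighted count of additive quadruples. The character does not cancel, and there is no known mechanism to extract additive energy of $A$ or $B$ (and hence no small-doubling structure via Balog--Szemer\'edi--Gowers) from a large character sum alone. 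This is precisely why the conjecture is hard: largeness of the sum \eqref{Paley} does not, by any currently known argument, imply any additive structure on $A$ or $B$. So your Step~2 does not follow from Step~1, and the scheme breaks before one even reaches the calibration problem you identify. The paper's Theorem~\ref{main_theorem} goes in the opposite logical direction — it \emph{assumes} small doubling as a hypothesis and deduces cancellation — which is consistent with the state of the art: one can handle the structured case, but the dichotomy's ``extract structure from a large sum'' half is missing.
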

Currently there are  very few results regarding the above conjecture.
The only  affirmative answer
was obtained
 in the following case   
\begin{equation}\label{karacuba}
\abs{A}>p^{\frac12+\delta},~~\abs{B}>p^{\delta}\,,
\end{equation}
 see \cite{Kar}---\cite{Kar2}.
 M.--C. Chang \cite{Chang}
proved a result towards the conjecture
for  sets $A$ and $B$, where one  of them has a small sumset.
Her work was continued in \cite{Hanson}, \cite{SV}, \cite{Volostnov_E3} and  
currently the best result is the following theorem of Volostnov \cite{Volostnov_E3}.

\begin{theorem}\cite{Volostnov_E3}
\label{main_theorem_Volostnov}
	Let $A,\,B\subset\mathbb{F}_p$  and  $K,L,\delta > 0$ be such that
	\begin{equation*}
	|A|, |B| >p^{\frac{1}{3}+\delta}, \quad \quad \mbox{ and } \quad \quad 
	|A+A|< K|A|,~
	|B+B| < L|B| \,.
	\end{equation*}
	Then there is a positive function $C(K)$ such that for any nontrivial multiplicative character $\chi$ modulo $p$ one has
	\begin{equation}\label{f:main_theorem_Volostnov}
	\Big|\sum_{a\in A,\, b\in B} \chi(a+b)\Big| \ll p^{-\d^2 /C(K)} |A| |B| \,,
	\end{equation}
	provided that $p>p(\delta, K, L)$. 
\end{theorem}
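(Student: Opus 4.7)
The plan is to combine a H\"older-type reduction of $|S|^{2k}$ to a complete character sum with Weil's bound, and to control the resulting ``diagonal'' contribution via additive energy statistics that are themselves controlled by the small doubling of $A$ and $B$.

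For an integer parameter $k$ to be optimised, H\"older's inequality in the variable $a\in A$ gives
$$|S|^{2k}\le |A|^{2k-1}\sum_{a\in A}\Big|\sum_{b\in B}\chi(a+b)\Big|^{2k} = |A|^{2k-1}\sum_{\vec b,\vec b'\in B^k}\sum_{a\in A}\chi\Big(\prod_{i=1}^{k}(a+b_i)(a+b'_i)^{p-2}\Big).$$
The inner $a$-summation is then extended from $A$ to all of $\F_p$, at the cost of an additional Cauchy--Schwarz using $1_A$, turning it into a complete character sum to which Weil's bound applies. On ``non-diagonal'' tuples, where the multisets $\{b_i\}$ and $\{b'_i\}$ differ, one gains a per-tuple factor $O(k\sqrt p)$; the diagonal tuples contribute at most $|A|$ each and number $k!\,E_k(B)$, where $E_k(B)$ is the $k$-th additive energy of $B$. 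Pl\"unnecke--Ruzsa applied to $|B+B|\le L|B|$ controls the sumsets $|kB|$ and $|kB-kB|$, and through them the relevant additive statistics of $B$; the hypothesis $|A+A|\le K|A|$ enters analogously upon repeating the argument with the roles of $A$ and $B$ interchanged and combining the two outcomes by a second H\"older step.

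Balancing the Weil error against the diagonal contribution, taking $2k$-th roots, and optimising $k$ of order $1/\delta$ yields the claimed saving $|S|\ll p^{-\delta^2/C(K)}|A||B|$, valid once $|A|,|B|>p^{1/3+\delta}$. The principal obstacle is precisely the simultaneous exploitation of the two small-doubling hypotheses: one H\"older--Weil step alone only reproduces the Karatsuba threshold $p^{1/2+\delta}$, so an iterated two-sided reduction is required, and isolating $C(K)$ as a (polynomial-in-$K$) quantity while retaining the quadratic $\delta^2$ in the exponent demands careful bookkeeping of the Pl\"unnecke losses accumulating through the iteration. This is the point at which the exponent $1/3$ rather than $1/2$ emerges, reflecting that three layers of structure---two H\"olders on top of the Weil square root---must conspire to beat $\sqrt{p}$.
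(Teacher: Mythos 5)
The proposal has a genuine gap, and it is precisely at the point you flag as the ``principal obstacle.'' The first H\"older--Weil step is standard and does indeed reproduce the Karatsuba barrier $|A|>p^{1/2+\delta}$; but the claim that ``an iterated two-sided reduction'' with roles of $A$ and $B$ interchanged pushes this down to $p^{1/3+\delta}$ is not substantiated and, as far as I can see, cannot work. Interchanging $A$ and $B$ and applying the same energy-plus-Weil machinery again does not combine to anything stronger: the diagonal contribution you must beat involves $|A|\cdot E_k(B)$ (or symmetrically $|B|\cdot E_k(A)$), and Pl\"unnecke only tells you $E_k(B)\ll_{L,k}|B|^{2k-1}$, which after balancing against the Weil term $k\sqrt p\,|B|^{2k}$ gives exactly the $p^{1/2}$ threshold no matter how many times you swap variables. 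Small additive energy of $B$ (or $A$) is simply not enough structural information to go below $\sqrt p$.

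What actually breaks the barrier in Volostnov's theorem---and in Chang's earlier work that it refines---is Freiman's theorem: the hypothesis $|A+A|<K|A|$ is converted into the statement that $A$ is contained in a generalized arithmetic progression of bounded rank $d=d(K)$ and size $\ll_K|A|$. The character sum is then averaged over translations $a\mapsto a+n_1g_1+\dots+n_dg_d$ by elements of the progression, turning the inner sum over $a$ into a sum over a genuine multi-dimensional progression with controlled generators, to which Weil-type estimates in the style of Lemma~\ref{l:davenport} apply directly. This is also why $C(K)$ ends up exponential in $K$ in Volostnov's statement: Freiman's theorem loses exponentially in the doubling constant. The paper you are reading avoids this by replacing Freiman with the weaker but more efficient structural statement of Lemma~\ref{l:mult_inclusion}, which produces a large product $[2^l]\cdot Z$ inside $2A-2A$; but some structural result of this type is indispensable. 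A purely analytic H\"older--Weil--energy argument, iterated or not, does not supply the multiplicative translates needed to pass below $p^{1/2}$.
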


The following theorem   is the main result of our paper.

\begin{theorem}
	\label{main_theorem}
	Let $A,\,B\subset\mathbb{F}_p$  and  $K,L,\delta > 0$ be such that $|A|>p^\d,\,|B| >p^{1/3+\d},$
	\begin{equation*}
	|A| |B|^2 >p^{1+\delta}   \quad \quad \mbox{ and } \quad \quad 
	|A+A|< K|A|,~
	|B+B| <L|B| \le p^{\d/2} |B| \,.
	\end{equation*}
	Then there is an absolute constant $c>0$ such that for any nontrivial multiplicative character $\chi$ modulo $p$ one has
	\begin{equation}\label{f:main_theorem}
	\Big|\sum_{a\in A,\, b\in B} \chi(a+b)\Big| \ll \exp (-c(\d^4 \log p /\log^2 K)^{1/3} )|A||B|   \,,
	\end{equation}
	provided  that
	$
		\log^5 K \ll \d^{4} \log p \,.
$
\end{theorem}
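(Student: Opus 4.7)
The plan combines a high-moment (Hölder) argument with Weil's bound, exploiting the small doubling hypotheses on $A$ and $B$ via Plünnecke--Ruzsa to enlarge the summation ranges before invoking Weil. Set $S := \sum_{a\in A,\,b\in B}\chi(a+b)$ and $\eta := |S|/(|A||B|)$; the goal is to bound $\eta$. A standard pigeonhole (phase-aligning the inner character sum and discarding an $O(\eta)$-fraction of $B$) produces a subset $B_0\sbeq B$ with $|B_0|\gg\eta|B|$ on which $\bigl|\sum_{a\in A}\chi(a+b)\bigr|\gg\eta|A|$ for every $b\in B_0$.

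Next, apply H\"older's inequality at an integer moment $2k$ to be optimized:
$$(\eta^2|A||B|)^{2k}\ll|B_0|^{2k-1}\sum_{b\in B_0}\Bigl|\sum_{a\in A}\chi(a+b)\Bigr|^{2k}.$$
Expanding the $2k$-th power and swapping orders of summation yields $\sum_{\vec a,\vec a'\in A^k}\sum_b\chi(R_{\vec a,\vec a'}(b))$ with $R$ a rational function. Since the $b$-sum is over $B$ and not over $\mathbb{F}_p$, Weil does not directly apply; I would exploit the small doubling of $B$ together with $|mB-mB|\le L^{2m-1}|B|$ to embed $\sum_{b\in B_0}$ into a weighted sum over an enlarged set $\widetilde B\supseteq B_0$ of size $L^{O(k)}|B|$, losing only an $L^{O(k)}$ factor. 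Symmetrically, the small doubling of $A$ lets the inner variable range over a set $\widetilde A\supseteq A$ with $|\widetilde A|\le K^{O(m)}|A|$. Weil then contributes $O(k\sqrt p)$ per ``non-degenerate'' tuple $(\vec a,\vec a')$; the degenerate tuples (where $\{a_i\}=\{a'_j\}$ as multisets) contribute at most $k!|\widetilde A|^k\cdot|\widetilde B|$. Putting the two contributions together yields an upper bound of the schematic shape
$$|S|^{2k}\ll K^{O(mk)}L^{O(k)}\Bigl(k\sqrt p\,|A|^{2k}|B|+k!\,|A|^k|B|^k\Bigr).$$

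Comparing this with the lower bound $(\eta^2|A||B|)^{2k}$ and optimizing $k$ and $m$ under the constraints $|A|>p^\d$, $|B|>p^{1/3+\d}$, $|A||B|^2>p^{1+\d}$, $L\le p^{\d/2}$, and $\log^5 K\ll\d^4\log p$, one is led to the choice $k\asymp(\d^4\log p/\log^2 K)^{1/3}$, which produces the announced exponential saving. The hypothesis $|A||B|^2>p^{1+\d}$ is precisely what guarantees that the Weil term beats the trivial bound at this value of $k$. The main obstacle is the enlargement step: one must verify that averaging over representations in $mB-mB$ loses only an $L^{O(k)}$ factor (not worse) and that the resulting enlarged sum still admits the Weil saving on non-degenerate tuples. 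More delicate still is the accurate counting of degenerate tuples, which must use the additive-energy consequences of small doubling rather than the trivial $|A|^k$-type count; this is the point where the hypothesis $\log^5 K\ll\d^4\log p$ is consumed, and it is what ultimately upgrades the standard Karatsuba moment estimate into a bound with $\log p$ appearing inside the exponent.
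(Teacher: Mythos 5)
There is a genuine gap, and it sits exactly where you flag it as ``the main obstacle.'' Replacing $\sum_{b\in B_0}$ by a weighted sum over a larger set $\widetilde B\supseteq B_0$ with $|\widetilde B|\le L^{O(k)}|B|$ does not make Weil applicable: $\widetilde B=mB-mB$ is still an arbitrary set of size far below $p$ (here $|B|$ can be as small as $p^{1/3+\d}$), and Weil's bound requires the free variable to run over the whole field or, via Davenport--Erd\H{o}s/Karatsuba, over an \emph{interval}. Completing the sum to $\F_p$ with Fourier analysis costs a factor of roughly $p/|\widetilde B|$, which is much larger than the $\sqrt p$ gain Weil provides unless $|\widetilde B|$ is nearly $p$ --- which only happens in the regime $|B|>p^{1/2+\d}$, already covered by Karatsuba. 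Small doubling by itself does not buy you the interval structure; that is exactly what previous work got from Freiman's theorem, with the resulting exponentially poor dependence on $K$, and the entire point of the paper's Theorem~\ref{main_theorem} is to replace Freiman's theorem by something much cheaper.

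The missing idea is the structural Lemma~\ref{l:mult_inclusion}: for a set with $|A+A|\le K|A|$ one can find a set $Z$ of size $\ge\exp(-Cl^3\log^2 K)|A|$ such that $[2^l]\cdot Z\subseteq 2A-2A$. This manufactures, inside $2A-2A$, a large ``Cartesian-like'' product of a genuine interval $I=[2^l]$ with a large set $Y$. One then translates the outer variable $a$ by $xy$ with $x\in I$, $y\in Y$ (this is the role of \eqref{tmp:10.09_1-}--\eqref{tmp:10.09_1}), so that after Cauchy--Schwarz/H\"older the Weil input is the estimate of Lemma~\ref{l:davenport} applied to the character sum over the interval $I$, not over $B$. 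The degenerate/energy count you gesture at with additive energy is, in the paper, a multiplicative-energy count $\sum\nu^2(u_1,u_2)$ controlled by Corollary~\ref{l:E_3}, which in turn comes from Rudnev's point--plane theorem (Theorem~\ref{t:Misha+}), not from a tuple-by-tuple Weil degeneracy analysis. Without Lemma~\ref{l:mult_inclusion} your plan cannot close, and I do not see how to repair it within the outline you give.
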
	

 In the previous works \cite{Chang}, \cite{SV}, \cite{Volostnov_E3} the doubling $K$ can tend to infinity very slowly as the function $C(K)$ has an exponential nature.
However, our result is applicable for a much wider range of $K$. A  new structural result given in section 4, is what makes our aproach much more effective and it does not require us to use Freiman's theorem.

We apply Theorem \ref{main_theorem} to prove a new sum-product type  result. 
Let us recall a well--known Theorem of  Balog \cite[Theorem 1]{Balog} (see also \cite{Petridis}).

\begin{theorem}\cite{Balog}
	\label{t:Balog}
	Let $A\subseteq \F_q$, $q=p^s$ with $|A| \ge q^{1/2+1/2^k}$ for some positive integer $k$. 
	If $A$ is an additive subgroup of $\mathbb{F}_{q}$ assume additionally  that $|A| \geqslant q^{1/2}+1$. 
	Then  
	$$
		(A-A)^{2k+1}=\mathbb{F}_{q} .
	$$
\end{theorem}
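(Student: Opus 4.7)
\medskip\noindent\textbf{Proof plan.}

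The plan is to fix $x \in \F_q^{*}$ and show that the representation number
\[
N(x):=\#\bigl\{(a_i,b_i)_{i=1}^{2k+1}\in A^{4k+2}:\,\prod_{i=1}^{2k+1}(a_i-b_i)=x\bigr\}
\]
is strictly positive; since $0 \in A-A$, this yields $(A-A)^{2k+1}=\F_q$. By orthogonality of the multiplicative characters of $\F_q^{*}$,
\[
N(x)=\frac{1}{q-1}\sum_\chi\bar\chi(x)\,S(\chi)^{2k+1},\qquad S(\chi):=\sum_{a,b\in A}\chi(a-b),
\]
the principal character contributing the main term $M=(|A|^2-|A|)^{2k+1}/(q-1)\sim|A|^{4k+2}/q$; it remains to show that the error from non-principal $\chi$ is dominated by $M$.

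The basic tool is the mean-square identity $\sum_\chi|S(\chi)|^2=(q-1)E^{+}(A)\le(q-1)|A|^3$, where $E^{+}(A)$ is the additive energy of $A$, combined with sharp pointwise bounds on $|S(\chi)|$. I would obtain these by iterating Cauchy--Schwarz (or H\"older) in the $a,b$ variables of $S(\chi)$: each iteration doubles the exponent and, after completing the outer variable from $A$ to all of $\F_q$, allows the Weil bound to be invoked on a character sum $\sum_{a\in\F_q}\chi\bigl(\prod_i(a-b_i)/(a-b_i')\bigr)$. The ``diagonal'' contributions (those for which the numerator and denominator agree as multisets) produce a main term of size $m!\,|A|^{m}q$ at the $m$th iteration, while the remaining tuples are controlled by Weil with saving $\sqrt{q}$; optimising the number of iterations is what produces the doubly-exponential threshold $|A|\ge q^{1/2+1/2^k}$ appearing in the hypothesis.

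Inserting the resulting bound on $|S(\chi)|$ into the H\"older-type split
\[
|E|\le\frac{1}{q-1}\bigl(\max_{\chi\ne\chi_0}|S(\chi)|\bigr)^{2k-1}\sum_\chi|S(\chi)|^2
\]
will give $|E|<M$, hence $N(x)>0$ for every $x\ne 0$. The supplementary hypothesis $|A|\ge q^{1/2}+1$ for additive subgroups is only needed to rule out the subfield obstruction $A\subseteq\F_{q^{1/2}}$, for which $(A-A)^{2k+1}=A$ is trapped in a proper subfield; this degeneracy is invisible to the character-sum machinery. The main difficulty will be the iterated Cauchy--Schwarz itself, since a careless iteration leaks a factor of $|A|$ at each step and yields only the weaker threshold $q^{1/2+O(1/k)}$; to reach $q^{1/2+1/2^k}$, the Weil bound has to be harvested carefully at every stage.
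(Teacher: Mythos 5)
This is a statement that the paper cites from Balog's paper \cite{Balog} rather than proving; there is no proof in the present paper to compare against, so I will assess your plan on its own terms and against Balog's actual argument.

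Your route is genuinely different from Balog's. Balog's proof is elementary and combinatorial: it iterates a Glibichuk--Konyagin type covering lemma (if $X,Y$ have $|X||Y|>q$ and $Y$ is symmetric with $0\in Y$, a bounded combination of $XY$ covers $\F_q$) starting from the symmetric set $D=A-A$, and it is this doubling-type iteration that naturally produces the $q^{1/2+1/2^{k}}$ threshold and works uniformly over all $\F_{q}$. Your plan instead rests entirely on multiplicative character sums and orthogonality, which is the mechanism of Karatsuba, Chang, and of the main theorem of this very paper -- but there those methods require either one set substantially larger than $\sqrt q$ or a small-doubling hypothesis, and the reason is a real obstruction, not an accident.

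Concretely, the step that does not work is the pointwise bound on $S(\chi)=\sum_{a,b\in A}\chi(a-b)$. Writing $\chi$ via Gauss sums, $S(\chi)=\tau(\bar\chi)^{-1}\sum_{t}\bar\chi(t)|\widehat A(t)|^{2}$, so by $|\tau|=\sqrt q$ and Parseval one has $|S(\chi)|\le |A|\sqrt q$, and this is essentially all that a single Cauchy--Schwarz plus completion can give; iterating Cauchy--Schwarz/H\"older and invoking Weil at the end improves this only when $|A|$ is comfortably above $q^{1/2}$ or when $A$ carries extra structure, not for general $A$ of size $q^{1/2+\varepsilon}$. Plugging the bound $\max_{\chi\ne\chi_{0}}|S(\chi)|\le |A|\sqrt q$ together with $\sum_{\chi}|S(\chi)|^{2}\le(q-1)|A|^{3}$ into your H\"older split gives the error $\le |A|^{2k+2}q^{k-1/2}$, while the main term is $\sim|A|^{4k+2}/q$; the comparison forces $|A|\gtrsim q^{1/2+1/(4k)}$. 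Since $1/(4k)>1/2^{k}$ for every $k\ge5$, this is strictly weaker than the claimed threshold, and the gap grows with $k$. So the ``main difficulty'' you flag at the end -- harvesting Weil carefully at every stage -- is not a technical annoyance that more care will fix; it is precisely where a purely character-sum proof of Balog's theorem breaks down, which is why Balog's argument avoids characters altogether and why the present paper needs the small-doubling hypothesis to run its character-sum machinery. (Your remark about the subfield obstruction being invisible to character sums is correct and is in fact a second, independent signal of the same limitation.)
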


It easy to see that  Balog's theorem does not hold for set of size smaller than $q^{1/2}$. 
Let $A$  to be a nontrivial subfield of $\F_{p^2}$ then $|A|= p =|\F_{p^2} |^{1/2}$ and any combinations of  sums and products of elements from  $A$ belong to $A$. 
However, one may hope for an improvement of the  Balog's result  for fields $\F_p$, when $p$ is a prime number.  
	We break the square--root barrier in Theorem \ref{t:Balog} for subsets of  $\F_p$ and  we only need a few operations to generate the whole field.

\begin{theorem}
	There is a positive constant $c$ such that for every  $A\subseteq \F_p$ with 
	$$|A| \gg \exp(- c\log^{1/5} p) p^{1/2}  $$ we have
	\begin{equation}\frac{2A-2A}{A-A}  = \F_p \text{~~~or~~~} 
	\left(\frac{A-A}{A-A} \right)^2 (A-A) = \F_p \,.
	\end{equation}
	\label{t:Balog_new}
\end{theorem}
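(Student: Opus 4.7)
I argue by contradiction: suppose both equalities fail, and fix $\lambda\in\F_p\setminus\tfrac{2A-2A}{A-A}$ together with $\mu\in\F_p\setminus\bigl(\tfrac{A-A}{A-A}\bigr)^{2}(A-A)$. The hypothesis on $\lambda$ says that every $(a_1,\ldots,a_4,b_1,b_2)\in A^{6}$ satisfying $a_1+a_2-a_3-a_4=\lambda(b_1-b_2)$ must have $b_1=b_2$, so the number of such tuples equals $|A|\,E^{+}(A)$. Expanding the same count via additive characters yields the identity
\[
|A|\,E^{+}(A)=\frac{1}{p}\sum_{t\in\F_p}|\widehat{A}(t)|^{4}\,|\widehat{A}(\lambda t)|^{2},
\]
and isolating the $t=0$ term forces $E^{+}(A)\ge|A|^{5}/p$. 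Equivalently $E^{+}(A)\ge|A|^{3}/K$ with $K\le p/|A|^{2}\le\exp(2c\log^{1/5}p)$ under the size hypothesis on $|A|$. The Balog--Szemer\'edi--Gowers theorem then yields $A'\subseteq A$ with $|A'|\gg|A|/K^{O(1)}\gg\exp(-O(\log^{1/5}p))\,p^{1/2}$ and doubling $K'=|A'+A'|/|A'|\le K^{O(1)}$, so that $\log K'=O(\log^{1/5}p)$.

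Since $A'-A'\subseteq A-A$, it suffices to prove $\bigl(\tfrac{A'-A'}{A'-A'}\bigr)^{2}(A'-A')=\F_p$; this contradicts the choice of $\mu$. Apply Theorem~\ref{main_theorem} with $A=A'$, $B=-A'$, $K=L=K'$, and a fixed absolute constant $\delta>0$. All hypotheses are satisfied: $|A'|\gg p^{1/3+\delta}$ and $|A'|^{3}\gg p^{1+\delta}$ because $|A'|$ is close to $p^{1/2}$, and $K'\le p^{\delta/2}$ together with $\log^{5}K'\ll\delta^{4}\log p$ because $\log K'=O(\log^{1/5}p)$, provided the constant $c$ in the hypothesis on $|A|$ is chosen sufficiently small. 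Writing $T(\chi):=\sum_{a\ne a'\in A'}\chi(a-a')$, the theorem yields for every nontrivial multiplicative $\chi$
\[
|T(\chi)|\le\eta\,|A'|^{2},\qquad \eta:=\exp\bigl(-c_{1}(\log p/\log^{2}K')^{1/3}\bigr)=\exp(-c_{2}\log^{1/5}p).
\]

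The number of representations of $\mu$ as $\tfrac{(a_1-a_2)(a_3-a_4)(a_5-a_6)}{(b_1-b_2)(b_3-b_4)}$ with $a_{i},b_{j}\in A'$ and all differences nonzero equals
\[
N_{\mu}=\frac{1}{p-1}\sum_{\chi}\chi(\mu)\,T(\chi)^{2}\,\overline{T(\chi)}^{\,3},
\]
with principal term $T(\chi_{0})^{5}/(p-1)\sim|A'|^{10}/p$ coming from $\chi=\chi_{0}$. To bound the contribution of $\chi\ne\chi_{0}$ below the principal term one combines the pointwise estimate $|T(\chi)|\le\eta|A'|^{2}$ with the Plancherel identity $\sum_\chi|T(\chi)|^{2}=(p-1)\sum_{d\ne 0}r_{A'}(d)^{2}\le p|A'|^{3}$ and with a finer estimate on $\sum_\chi|T(\chi)|^{4}$---a weighted multiplicative energy of $A'-A'$, controlled by the Section~4 structural input together with sum-product in $\F_p$ for sets of small additive doubling. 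This forces $N_\mu>0$, contradicting $\mu\notin\bigl(\tfrac{A'-A'}{A'-A'}\bigr)^{2}(A'-A')$.

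\textbf{Main obstacle.} The delicate point is the final error estimate. The crude H\"older bound $\sum_{\chi\ne\chi_{0}}|T(\chi)|^{5}\le\max|T|^{3}\sum_\chi|T(\chi)|^{2}$ yields an error of order $\eta^{3}|A'|^{9}$, exceeding the main term $|A'|^{10}/p$ by a factor of roughly $p/|A'|\sim p^{1/2}$. The sharper input on $\sum_\chi|T(\chi)|^{4}$, and hence the Section~4 machinery of the paper, is therefore essential. The exponent $1/5$ in the size threshold for $|A|$ is dictated exactly by matching $\log K'=O(\log^{1/5}p)$ from BSG against the cube-root rate $(\log p/\log^{2}K')^{1/3}$ appearing in Theorem~\ref{main_theorem}.
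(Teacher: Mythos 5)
Your argument reproduces the paper's first steps faithfully: the observation that $\xi\notin\tfrac{2A-2A}{A-A}$ forces the count of solutions to $\xi(a_1-a_2)=a_3+a_4-a_5-a_6$ to equal $|A|\,\E^{+}(A)$, the Fourier lower bound giving $\E^{+}(A)\ge|A|^{5}/p$, and the invocation of the Balog--Szemer\'edi--Gowers theorem to produce $A'\subseteq A$ of comparable size with polynomial-in-$M$ doubling are all exactly what the paper does. You also correctly locate the source of the threshold $\exp(-c\log^{1/5}p)$ in the interplay between BSG (which costs $K^{O(1)}$) and the rate $\exp(-c(\log p/\log^{2}K)^{1/3})$ in Theorem~\ref{main_theorem}.

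The gap is in the final counting step, and you have already diagnosed it correctly yourself: writing $N_{\mu}$ as a fifth moment $\sum_{\chi}\chi(\mu)T(\chi)^{2}\overline{T(\chi)}^{3}$ and estimating the nonprincipal contribution by $\max_{\chi\neq\chi_{0}}|T(\chi)|^{3}\cdot\sum_{\chi}|T(\chi)|^{2}$ leaves a surplus factor of order $p/|A'|\approx p^{1/2}$, which the saving $\eta=\exp(-c\log^{1/5}p)$ cannot absorb. Your proposed remedy---a ``finer estimate on $\sum_{\chi}|T(\chi)|^{4}$''---is not developed, and in fact is not what the paper does. The paper sidesteps the fifth moment entirely by bringing in R\'edei's theorem (Lemma~\ref{l:Redei}): with $|A|=\sqrt{p}/M$ one gets $|Q|\ge p/(2M^{2})$ for $Q=\tfrac{A-A}{A-A}$, so $Q$ already occupies all but a $1/M^{2}$ fraction of $\F_p$. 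One then counts representations $z=(q/q')(a-b)$ with $q,q'\in Q$ and $a,b\in A'$, which Fourier-expands as $\tfrac{1}{p-1}\sum_{\chi}|\widehat{Q}(\chi)|^{2}T_{z}(\chi)$. Here only a \emph{first} power of $T_{z}$ appears; the error is bounded by $\max_{\chi\neq\chi_{0}}|T_{z}(\chi)|$ times $\tfrac{1}{p-1}\sum_{\chi}|\widehat{Q}(\chi)|^{2}=|Q|$, i.e.\ by $\eta|A'|^{2}|Q|$, while the main term is $|Q|^{2}|A'|^{2}/(p-1)\ge|Q||A'|^{2}/(2M^{2})$. The ratio of error to main term is then $O(\eta M^{2})$, which \emph{can} be beaten for $M\le\exp(c'\log^{1/5}p)$ with $c'$ small. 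In short, the missing idea is to use the largeness of $Q$ coming from R\'edei's lemma (together with the symmetry $Q=Q^{-1}$), so that Plancherel applied to $\widehat{Q}$ carries most of the weight and Theorem~\ref{main_theorem} only needs to be applied pointwise to a single factor $T_{z}(\chi)$, rather than cubed.
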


It was proven in \cite{HP} that any two sets $A,B$ with all sums belonging to the set of quadratic residues satisfy $|A||B|\le \frac{p-1}{2} + |B\cap (-A)|$. 
It almost solves a well--known S\'{a}rk\"{o}zy's conjecture \cite{Sarkozy_R} on additive decompositions of the  quadratic residues except for the case when $A+B$ equals exactly  the set  of quadratic residues and when the sum is direct. 
It immediately follows from  Theorem \ref{main_theorem} (also see Theorem \ref{t4} below)  that such sets $A$ and $B$ cannot have small doubling. 



\bigskip 

\centerline{\sc 2. Notation}
\label{sec:definitions}

\bigskip 

In this section we collect  notation used in the paper. 
Throughout the paper by  $p$  we always  mean an odd prime number  and we put 
$\F_p = \Z/p\Z$ and $\F_p^* = \F_p \setminus \{0\}$. 
%
%
We denote the Fourier transform of a function  $f : \F_p \to \mathbb{C}$ by
\begin{equation}\label{F:Fourier}
\FF{f}(\xi) =  \sum_{x \in \F_p} f(x) e( -\xi \cdot x) \,,
\end{equation}
where $e(x) = e^{2\pi i x/p}$. 
The Plancherel formula states that 
\begin{equation}\label{F_Par}
\sum_{x\in \F_p} f(x) \ov{g (x)}
=
\frac{1}{p} \sum_{\xi \in \F_p} \widehat{f} (\xi) \ov{\widehat{g} (\xi)} \,.
\end{equation}
The convolution of functions  $f,g : \f_p \to \mathbb{C}$ is defined by
\begin{equation}\label{f:convolutions}
(f*g) (x) := \sum_{y\in \f_p} f(y) g(x-y) \,.
\end{equation}
Clearly, we have
\begin{equation}\label{f:F_svertka}
\FF{f*g} = \FF{f} \FF{g} \,.
\end{equation}
We use the same capital letter to denote  set $A\subseteq \F_p$ and   its characteristic function $A: \F_p \to \{0,1 \}$. 
For any two sets $A,B \subseteq \f_p$ the additive energy of $A$ and $B$ is defined by
$$
\E^{+} (A,B) = |\{ (a_1,a_2,b_1,b_2) \in A\times A \times B \times B ~:~ a_1+b_1 = a_2+b_2 \}| \,.
$$
If $A=B$, then  we simply write $\E^{+} (A)$ for $\E^{+} (A,A)$.
Combining \eqref{F_Par} and \eqref{f:F_svertka},we derive  
\begin{equation}\label{f:energy_Fourier}
\E^{+}(A,B) = \frac{1}{p} \sum_{\xi} |\FF{A} (\xi)|^2 |\FF{B} (\xi)|^2 \,.
\end{equation}
 The multiplicative energy $\E^{\times} (A,B)$  is defined in an analogous way and it can be  expressed similarly by applying 
the Fourier transform on  group $\F_p^*$ and the  multiplicative convolution.
Given any two sets $A,B\subset \F_p$, define the  \textit{sumset}, the \textit{product set} and the \textit{quotient set} of $A$ and $B$ as 
$$A+B:=\{a+b ~:~ a\in{A},\,b\in{B}\}\,,$$
$$AB:=\{ab ~:~ a\in{A},\,b\in{B}\}\,,$$
and 
$$A/B:=\{a/b ~:~ a\in{A},\,b\in{B},\,b\neq0\}\,,$$
respectively. We define $k$-fold sumsets and  product sets analogously, for example  $2A-2A=A+A-A-A$ and $(A-A)^2=(A-A)(A-A)$.

All logarithms are to base $2.$ The signs $\ll$ and $\gg$ are the usual Vinogradov symbols.
For a positive integer $n,$ we put $[n]=\{1,\ldots,n\}.$ 

\bigskip 

\centerline{\sc 3. Auxiliary results} 
\bigskip 

Here we list    results that we will use over the course of  Theorem \ref{main_theorem} proof. 
The Lemma was established in  \cite[Lemma 14]{SV} and it is a corollary to  Weil's bound.

\begin{lemma}\label{l:davenport}
	Let $\chi$ be a nontrivial multiplicative character, $I$ be a discrete interval in $\F_p$  and $r$ be a positive integer.
	Then 
	\begin{eqnarray*}
	\sum_{u_1,u_2\in \mathbb{F}_p}\Big |\sum_{t\in I}\chi(u_1+t)\overline{\chi}(u_2+t)\Big |^{2r}< p^2 \abs{I}^r r^{2r}+4r^2p\abs{I}^{2r} \,.
	\end{eqnarray*}
\end{lemma}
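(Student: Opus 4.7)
The plan is to expand the $2r$-th power, factorise the double sum over $u_1,u_2$, and then invoke Weil's bound on the resulting one-dimensional character sum of a rational function, treating separately the pairs where Weil yields no cancellation. Set $S(u_1,u_2):=\sum_{t\in I}\chi(u_1+t)\ov{\chi}(u_2+t)$; opening $|S|^{2r}=S^{r}\ov{S}^{r}$ gives
\[
|S(u_1,u_2)|^{2r} \,=\, \sum_{\vec t,\vec s\in I^{r}}\prod_{i=1}^{r}\chi(u_1+t_i)\ov{\chi}(u_2+t_i)\ov{\chi}(u_1+s_i)\chi(u_2+s_i),
\]
and summing over $u_1,u_2\in\F_p$ the right-hand side factorises as
\[
\sum_{u_1,u_2\in\F_p}|S(u_1,u_2)|^{2r} \,=\, \sum_{\vec t,\vec s\in I^r}\abs{V(\vec t,\vec s)}^{2},
\qquad V(\vec t,\vec s):=\sum_{u\in\F_p}\chi\skob{\prod_{i=1}^{r}\frac{u+t_i}{u+s_i}},
\]
with the convention $\chi(0)=0$ absorbing any zeros or poles.

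Next, I would partition $I^r\times I^r$ into a \emph{diagonal} class $\mathcal D$, consisting of those $(\vec t,\vec s)$ for which $f_{\vec t,\vec s}(x):=\prod_i(x+t_i)/(x+s_i)$ is a $d$-th power in $\F_p(x)$ (with $d$ the order of $\chi$), and its complement. On $\mathcal D$ the composition $\chi\circ f_{\vec t,\vec s}$ is constant wherever defined, so $\abs{V(\vec t,\vec s)}\le p$ and the diagonal contribution is at most $p^{2}\abs{\mathcal D}$. For a pair outside $\mathcal D$ the rational function has total degree at most $2r$ and is not a $d$-th power, so Weil's bound for multiplicative character sums of rational functions yields $\abs{V(\vec t,\vec s)}\le (2r-1)\sqrt p$; the off-diagonal contribution is therefore at most $(2r-1)^{2}p\,\abs{I}^{2r}\le 4r^{2}p\,\abs{I}^{2r}$, which already accounts for the second term of the lemma.

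It remains to prove $\abs{\mathcal D}\le\abs{I}^{r}r^{2r}$. Writing $f_{\vec t,\vec s}=\prod_{t\in I}(x+t)^{m_t-n_t}$ with $m_t,n_t$ the multiplicities of $t$ in $\vec t,\vec s$, the $d$-th power condition reads $d\mid m_t-n_t$ for every $t$. Grouping the diagonal pairs by the common residue pattern $(m_t\bmod d)=(n_t\bmod d)$, each such group is parametrised by a pair of multisets on $I$ of total mass $r$ compatible with the pattern, and a direct combinatorial enumeration then yields the bound. This diagonal count is the main obstacle in the argument: for small $d$ (especially $d=2$) the class $\mathcal D$ strictly contains the permutation-pairs $\{\vec t=\pi\vec s\}$, so one cannot reduce to the trivial ``$\vec t$ is a reordering of $\vec s$'' case and must organise the enumeration by residue patterns modulo $d$ in order to secure the exponent $\abs{I}^r$ in the main term. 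The Weil step is by contrast essentially classical.
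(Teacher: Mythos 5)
Your high-level plan---open the $2r$-th power, factorise the $(u_1,u_2)$-sum into $\sum_{\vec t,\vec s}|V(\vec t,\vec s)|^2$ with $V(\vec t,\vec s)=\sum_u\chi\!\left(\prod_i(u+t_i)(u+s_i)^{d-1}\right)$, split according to whether the argument polynomial is a $d$-th power, and apply Weil on the complement---is the correct route and is what lies behind the citation to \cite{SV} (the paper itself gives no proof of the lemma). You are also right that for small $d$ the class $\mathcal D$ properly contains the permutation pairs, so the $d$-th power criterion is genuinely what must be counted, and the off-diagonal step $(2r-1)^2p|I|^{2r}<4r^2p|I|^{2r}$ is fine.

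The gap is exactly where you flag it: you assert $|\mathcal D|\le|I|^r r^{2r}$, but organising the count by residue patterns of $(m_t\bmod d)$ and ``a direct combinatorial enumeration'' does not by itself explain why the main term carries the exponent $|I|^r$ rather than $|I|^{2r}$. The missing observation is that in a diagonal pair no value can occur exactly once across the concatenation $(t_1,\ldots,t_r,s_1,\ldots,s_r)$: if $m_t+n_t=1$ then $\{m_t,n_t\}=\{0,1\}$, so $|m_t-n_t|=1$, which no $d\ge 2$ divides. Hence every occurring value has combined multiplicity at least $2$, so the partition of the $2r$ index positions into equal-value blocks has at most $r$ blocks. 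A pair in $\mathcal D$ is therefore determined by such a set partition of $[2r]$ together with an injective assignment of values from $I$ to the blocks; there are at most $r^{2r}$ partitions of $[2r]$ into at most $r$ non-empty blocks (each arises as the fibre partition of some map $[2r]\to[r]$), and at most $|I|^r$ value assignments. This gives $|\mathcal D|\le r^{2r}|I|^r$, as required. Without this observation the parametrisation by pairs of multisets that you sketch gives no control better than $|I|^{2r}$ on the number of diagonal pairs.
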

We will also need a corollary to   points/planes incidences theorem of Rudnev \cite{Rudnev_pp}. 

\begin{theorem}
	Let $p$ be an odd prime, $\mathcal{P} \subseteq \F_p^3$ be a set of points and $\Pi$ be a collection of planes in $\F_p^3$. 
	Suppose that $|\mathcal{P}| \le |\Pi|$ and that $k$ is the maximum number of collinear points in $\mathcal{P}$. 
	Then the number of point--planes incidences satisfies 
	\begin{equation}\label{f:Misha+_a}
	|\{ (q,\pi) \in \mathcal{P} \times \Pi ~:~ q\in \pi \}|  - \frac{|\mathcal{P}| |\Pi|}{p} \ll |\mathcal{P}|^{1/2} |\Pi| + k |\Pi| \,.	
	\end{equation}
	\label{t:Misha+}	
\end{theorem}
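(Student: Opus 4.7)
The plan is to derive \eqref{f:Misha+_a} directly from Rudnev's original point--plane incidence theorem. In its standard form, under the hypotheses $|\mathcal{P}|\le|\Pi|$ and $k$ being the maximum number of collinear points in $\mathcal{P}$, Rudnev's result yields
$$I(\mathcal{P},\Pi) \ll |\mathcal{P}|^{1/2}|\Pi|+k|\Pi|$$
in the range where this count dominates the ``baseline'' contribution $|\mathcal{P}||\Pi|/p$ coming from the trivial fact that each plane in $\mathbb{F}_p^3$ contains $p^2$ points of the ambient space. Transferring that baseline to the left--hand side, as in \eqref{f:Misha+_a}, is a convenient way to unify the sparse and dense regimes into a single statement.

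Concretely, I would proceed by a two--case split according to the size of $I(\mathcal{P},\Pi)$ relative to the baseline. If $I(\mathcal{P},\Pi)\ge 2|\mathcal{P}||\Pi|/p$, then the left--hand side of \eqref{f:Misha+_a} is at least $I(\mathcal{P},\Pi)/2$, and one applies Rudnev's inequality in its raw form. Otherwise $I(\mathcal{P},\Pi)< 2|\mathcal{P}||\Pi|/p$, so the left--hand side is at most $|\mathcal{P}||\Pi|/p$, and it remains only to check that this baseline is absorbed by the right--hand side. For $|\mathcal{P}|\le p^2$ this is immediate, since $|\mathcal{P}|/p\le |\mathcal{P}|^{1/2}$ gives $|\mathcal{P}||\Pi|/p\le |\mathcal{P}|^{1/2}|\Pi|$. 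For denser configurations $|\mathcal{P}|>p^2$, a pigeonhole argument on the $p^2$ parallel affine lines in any fixed direction forces some line to carry at least $|\mathcal{P}|/p^2$ points of $\mathcal{P}$, so $k\ge |\mathcal{P}|/p^2$; combined with the hypothesis $|\mathcal{P}|\le|\Pi|$, this disposes of the remaining sub--case.

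The only substantive obstacle is that Rudnev's theorem itself is invoked as a black box; its proof via the polynomial method adapted to finite fields is intricate but well--documented and lies outside the scope of the present paper. The deviation--form rearrangement described above is otherwise elementary, and in the applications later in the paper the incidence structures will arise from Cartesian products of sets of small doubling, where $|\mathcal{P}|$ stays well below $p^2$ and only the sparse regime of Rudnev's inequality is actually needed.
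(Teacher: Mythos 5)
The paper does not actually prove Theorem~\ref{t:Misha+}: it is imported as a known result, introduced by the sentence ``We will also need a corollary to points/planes incidences theorem of Rudnev \cite{Rudnev_pp}'', and no argument for it is given. So there is no in-paper proof for your proposal to track; you are supplying a derivation that the authors chose to omit.

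Your derivation has a genuine gap in the dense regime $|\mathcal{P}|>p^2$, in both branches of the split. In Case~1 you appeal to Rudnev's bound ``in its raw form'', but the raw form carries the hypothesis $|\mathcal{P}|=O(p^2)$ (and is false without it), and nothing in the condition $I\ge 2|\mathcal{P}||\Pi|/p$ forces $|\mathcal{P}|\le p^2$. In Case~2 the absorption step breaks down: pigeonhole over the $p^2$ parallel affine lines gives only $k\ge|\mathcal{P}|/p^2$, while absorbing $|\mathcal{P}||\Pi|/p$ into $k|\Pi|$ would need $k\gg|\mathcal{P}|/p$, a factor $p$ stronger; and the inequality $|\mathcal{P}|/p\le|\mathcal{P}|^{1/2}$ reverses direction as soon as $|\mathcal{P}|>p^2$. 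For a concrete failure of the absorption, take $\mathcal{P}$ a union of $m$ parallel affine planes, so $|\mathcal{P}|=mp^2$ and $k=p$, and let $\Pi$ be any family of at least $mp^2$ planes: then $|\mathcal{P}||\Pi|/p=mp\,|\Pi|$ whereas $|\mathcal{P}|^{1/2}|\Pi|+k|\Pi|\sim m^{1/2}p\,|\Pi|$, so your crude upper bound $|\mathcal{P}||\Pi|/p$ for the left-hand side exceeds the right-hand side by a factor of order $m^{1/2}$ --- even though \eqref{f:Misha+_a} itself still holds in this example, because the true deviation $I-|\mathcal{P}||\Pi|/p$ is far smaller than the crude bound. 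Dropping the $|\mathcal{P}|\le O(p^2)$ constraint in exchange for subtracting the expected main term is a refinement that lives inside Rudnev's proof (or in one of the later reformulations), not something one obtains by rearranging the statement from outside. That said, in the only place the paper uses Theorem~\ref{t:Misha+} (the proof of Corollary~\ref{l:E_3}) one has $|A|,|B|<\sqrt p$ and $K\le|B|$, so the relevant point set satisfies $|\mathcal{P}|=|A||B||A+A|\le |A|^2|B|^2<p^2$; restricted to this sparse range, your two-case rearrangement is sound and is indeed the standard way to pass from Rudnev's raw bound to the centred form.
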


\begin{corollary}\label{l:E_3}
	Let $A,B \subset \F_p$ be such that  $|A|,|B| <\sqrt{p}$ and $|A+A| \le K|A|$, $|B+B| \le L|B|$. 
	Then the system
\begin{equation}\begin{cases}
\,\frac{b_1}a=\frac{b_1'}{a'}\\
\,\frac{b_2}a=\frac{b_2'}{a'}
\end{cases}
\label{system}
\end{equation}
	has 
\begin{equation}\label{f:bound_E^t_3}
O(K^{5/4} L^{5/2} |A||B|^2 \log p + |A|^2 |B|)
\end{equation}
	solutions in $(a,a',b_1,b_1',b_2,b_2')\in A^2\times B^4$. Furthermore,
for an arbitrary set $B$ with  $K\le |B|$
there are  at most 
\begin{equation}\label{f:bound_E^t_3-1}
O(K^{3/2} |A| |B|^{5/2})
\end{equation}
solutions. 
\end{corollary}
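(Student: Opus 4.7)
The plan is to recast the count geometrically and apply Rudnev's point--plane theorem (Theorem~\ref{t:Misha+}). Setting $\mu=a/a'$, the system \eqref{system} is equivalent to the proportionality $(a,b_1,b_2)=\mu(a',b_1',b_2')$ in $\F_p^3$, so that the number of solutions equals
\[
T\;=\;\sum_\mu |A\cap\mu A|\cdot|B\cap\mu B|^2\;=\;\sum_{\ell\ni 0}n(\ell)^2,
\]
where $\ell$ runs over lines through the origin and $n(\ell)=|\ell\cap(A\times B\times B)|$.

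First I would dyadically decompose $T$ over the magnitudes of $|A\cap\mu A|$ and $|B\cap\mu B|$ (equivalently, over dyadic sizes of $n(\ell)$). Every line through $0$ in direction $(a',b_1',b_2')$ is the intersection of the two planes
\[
\pi_1(a',b_1'):\ \{a'y=b_1'x\},\qquad \pi_2(a',b_2'):\ \{a'z=b_2'x\},
\]
so after a Cauchy--Schwarz step that passes from intersection-of-planes incidences to single-plane incidences, each dyadic level reduces to a point--plane count between $\mathcal{P}=A\times B\times B$ and a plane family $\Pi_j=\{\pi_j(a',b_j')\}$, which is exactly the setting of Theorem~\ref{t:Misha+}.

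The main obstacle is to control the ``maximum collinear points'' parameter $k$ in Rudnev's bound. A line in $\F_p^3$ rich in points of $\mathcal{P}$ forces $A$ and $B$ to support joint arithmetic progressions, and combining the doubling hypotheses $|A+A|\le K|A|$ and $|B+B|\le L|B|$ with the Pl\"unnecke--Ruzsa inequality keeps $k$ small; the assumption $|A|,|B|<\sqrt p$ simultaneously ensures that the term $|\mathcal{P}||\Pi|/p$ of Theorem~\ref{t:Misha+} is negligible. Optimising over the dyadic threshold and summing then yields the main term $K^{5/4}L^{5/2}|A||B|^2\log p$ of \eqref{f:bound_E^t_3}, while the additive $|A|^2|B|$ absorbs the degenerate contribution from $\mu=1$, where the two planes $\pi_1,\pi_2$ collapse into a pencil.

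For the bound \eqref{f:bound_E^t_3-1}, where no doubling of $B$ is available, the two-sided dyadic argument must be replaced by a direct application of Theorem~\ref{t:Misha+} in which only the $K$-doubling of $A$ governs the collinearity parameter (through Pl\"unnecke--Ruzsa on multiples of $A$), and the $B$-side is handled by the trivial third-moment estimate $\sum_\mu |B\cap\mu B|^3\le |B|\,\mathsf{E}^\times(B)$. The technical assumption $K\le|B|$ ensures that the Pl\"unnecke losses on $A$ remain compatible with the target $K^{3/2}|A||B|^{5/2}$, and closes the estimate.
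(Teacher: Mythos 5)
Your proposal departs significantly from the paper's proof and contains several gaps that would prevent it from closing. First, the bound \eqref{f:bound_E^t_3} is not reproved in the paper at all --- it is simply quoted from \cite[Lemma 2]{Volostnov_E3}; the paper only proves \eqref{f:bound_E^t_3-1}. Second, and more importantly, for \eqref{f:bound_E^t_3-1} the paper's argument hinges on two steps you do not have. The first is the elementary reduction $\sigma \le |B|\,\E^{\times}(A,B)$, i.e.\ $\sum_\mu |A\cap\mu A|\,|B\cap\mu B|^2 \le |B|\sum_\mu |A\cap\mu A|\,|B\cap\mu B|$ (not the pure third moment $\sum_\mu |B\cap\mu B|^3$ you invoke, which is a different and less useful quantity). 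The second is the \emph{inflation} of $A$: since every $x\in A$ has exactly $|A|$ representations $x=s-a$ with $s\in A+A$, $a\in A$, one gets
$\E^{\times}(A,B)\le |A|^{-2}\,|\{(s-a)b=(s'-a')b'\}|$,
and it is this rewriting that produces a genuinely bilinear point/plane configuration, with $\mathcal{P}=\{(s,b',a'b')\}$, $s\in A+A$, $b'\in B$, $a'\in A$, and $\Pi=\{(x-a)b=s'y-z\}$. The doubling of $A$ enters only through $|A+A|\le K|A|$ here, both in bounding $|\mathcal{P}|,|\Pi|$ and in bounding the collinearity parameter.

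Your claim that the doubling hypotheses ``keep $k$ small'' for the Cartesian product $\mathcal{P}=A\times B\times B$ is false: a line parallel to a coordinate axis already carries $\max\{|A|,|B|\}$ points of a Cartesian product regardless of any additive structure, so $k$ is not small and the $k|\Pi|$ term in Theorem~\ref{t:Misha+} is too large if one applies Rudnev to $A\times B\times B$ directly. It is precisely the skewed set $\{(s,b',a'b')\}$ that caps the collinearity at $k=\max\{|B|,|A+A|\}$ (the point-plane trick replaces a rich third direction by a multiplicative fiber $a'b'$). Your ``Cauchy--Schwarz step that passes from intersection-of-planes incidences to single-plane incidences'' is not a standard move and is not spelled out; as written it does not yield the structure Theorem~\ref{t:Misha+} needs. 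Finally, the $\mu=1$ degeneracy contributes $|A||B|^2$ solutions (with $a=a'$, $b_i=b_i'$), not $|A|^2|B|$ as you state; both of these are in any case dominated by the leading terms, and the extra $|A|^2|B|$ in \eqref{f:bound_E^t_3} has a different origin (it comes from the point-plane error terms in Volostnov's argument). You should also note the auxiliary reduction the paper makes before invoking Rudnev: one may assume $|B|\le |A|^2/K^3$, since otherwise the trivial bound $\sigma\le |B|^2|A|^2\le K^{3/2}|A||B|^{5/2}$ already gives the result; together with $K\le |B|$ and $|A|<\sqrt p$ this forces $K<p^{1/4}$, which is what makes the $|\mathcal{P}||\Pi|/p$ term negligible.
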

\begin{proof} 
	The first part of the corollary is \cite[Lemma 2]{Volostnov_E3}, so it is enough to prove \eqref{f:bound_E^t_3-1}. 
	Denote by $\sigma$  the number of the solutions to \eqref{system}. 
	Clearly,  we have $\sigma \le |B| \E^{\times} (A,B) \le |B|^2 |A|^2$.
	Thus, we can assume that $|B| \le |A|^2/K^3,$ as otherwise $|B|^2 |A|^2\le K^{3/2} |A| |B|^{5/2}$ and the assertion follows. 
	Since every $x\in A$ can be written in $|A|$ ways as $x=s-a$, $s\in A+A$ and  $a\in A$ it follows that 
	\[
	\E^{\times} (A,B) \le |A|^{-2} |\{(s-a)b = (s'-a')b' ~:~ s,s'\in A+A,\, a,a'\in A,\, b,b'\in B \}|\,.
\]  
	Next, we apply Theorem \ref{t:Misha+} with 
	$$\mathcal{\P}=\{(s,b',a'b'):  a'\in A, b'\in B, s\in A+A \}$$
	and 
	$$\Pi = \big \{(x-a)b = s' y - z: a\in A, b\in B, s'\in A+A \big\}\,.$$ 
	If all elements of  $A, B$ and $A+A$ are nonzero then $|\Pi|=|A||B||A+A|,$ otherwise one can remove zero from those sets.
	Observe that every incidence between a plane $(x-a)b = s' y - z$ from $\Pi$  and a point $(s,b',a'b')$ from $\mathcal{\P}$ gives a solution to
	\begin{equation}\label{s}
	(s-a)b = (s' - a')b'\,
	\end{equation}
	and each solution to \eqref{s} provides a point/plane incidence. 
	 One can easily check
that the maximal number of collinear points (points belonging a
line) in $\mathcal{P}$ is just the maximal size of  "skew Cartesian
product" $(s,b',a'b')$, so $ k = \max\{|A|, |B|, |A+A| \} =
\max\{|B|, |A+A|\}.$
	By Theorem \ref{t:Misha+}, in view of $K\le |B| \le |A|^2/K^3$ and $|A|<\sqrt{p}$, 
	we have
\begin{eqnarray*}
	\sigma 
	&\ll&
	|B| |A|^{-2} \left( \frac{|A+A|^2 |A|^2 |B|^2}{p} + (|A||A+A||B|)^{3/2} + |A||B||A+A|(|B| + |A+A|)\right) \\
	&\ll&
		\frac{|A+A|^2 |B|^3}{p} + K^{3/2} |A| |B|^{5/2} \ll K^{3/2} |A| |B|^{5/2} \,.
\end{eqnarray*}
	This completes the proof.
$\hfill\Box$
\end{proof}

\bigskip 

The last two results \cite{Redei} (see also \cite{Szonyi}) and \cite{bsg} will be used in the proof of Theorem \ref{t:Balog_new}. 
A modern form of Lemma \ref{l:Redei} can be found in \cite{BSW_Redei}.

\begin{lemma}
	Let $A\subseteq \F_p$ be a set with $|A| > 1$. 
	Then
\[
	\left|\frac{A-A}{A-A} \right| \ge \min \left\{ p,\, \frac{|A|^2+3}{2} \right\} \,.
\]
\label{l:Redei}
\end{lemma}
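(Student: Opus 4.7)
The plan is to realize $(A-A)/(A-A)$ as the set of finite slopes determined by the Cartesian product $U = A\times A$, viewed as a point set in the affine plane $\F_p\times\F_p$, and then invoke the R\'edei--Sz\"onyi direction theorem. Every element $\lambda = (a_1-a_2)/(b_1-b_2) \in (A-A)/(A-A)$ (with $b_1\ne b_2$) is exactly the slope of the line through $(b_1,a_1),(b_2,a_2)\in U$, and conversely every finite slope determined by a pair of points in $U$ has this form. Since $|A|\ge 2$, the set $U$ also contains two points sharing a first coordinate (contributing the vertical direction $\infty$) and four points in general position (preventing $U$ from lying on a single line).

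With this identification in hand, the natural split is according to whether $|A|^2>p$ or $|A|^2\le p$. In the first case no direction theorem is needed: for any $\lambda \in \F_p\setminus (A-A)/(A-A)$ the map $(a,b)\mapsto a-\lambda b$ on $A\times A$ is forced to be injective (otherwise one would have $\lambda = (a_1-a_2)/(b_1-b_2)$ for some distinct pair, placing $\lambda$ inside $(A-A)/(A-A)$), which is impossible when $|A|^2 > p$; hence $(A-A)/(A-A) = \F_p$. In the second case I would apply the direction theorem in the form ``any non-collinear $U\subseteq\F_p\times\F_p$ with $|U|\le p$ determines at least $(|U|+3)/2$ directions, counting the vertical one'' to $U=A\times A$, and read off the lower bound on $|(A-A)/(A-A)|$ after subtracting off the one vertical direction.

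The hard part sits entirely inside the direction theorem, whose proof rests on R\'edei's polynomial method applied to the fully reducible polynomial $\prod_{(a,b)\in U}(X+bY-a)$ and its factorisation properties over $\F_p$; this is the content of \cite{Redei} together with the sharpenings in \cite{Szonyi}, \cite{BSW_Redei}, and the cleanest route is to cite it as a black box. The only residual issue is cosmetic bookkeeping: matching the ``$+3$'' in $(|A|^2+3)/2$ to the counting convention used in the cited form of the direction theorem (whether the direction at infinity is being absorbed into the additive constant or subtracted off separately), and verifying that the Cartesian-product structure of $A\times A$ does not coincide with the extremal configurations in which R\'edei--Sz\"onyi is tight, so that no loss occurs when passing from the general statement to the present special case.
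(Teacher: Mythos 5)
Your proposal matches the paper's approach: the paper states Lemma~\ref{l:Redei} without proof, citing R\'edei \cite{Redei}, Sz\H{o}nyi \cite{Szonyi}, and the modern Cartesian-product form \cite{BSW_Redei}, which is exactly the direction-theorem route you lay out (identify $(A-A)/(A-A)$ with the finite slopes of $A\times A$, dispose of $|A|^2>p$ via injectivity of $(a,b)\mapsto a-\lambda b$, and invoke the direction bound when $|A|^2\le p$). Your caution about the ``$+3$'' bookkeeping is warranted: subtracting the vertical direction from the $(|U|+3)/2$ count only yields $(|A|^2+1)/2$ finite slopes, so the printed bound is in fact off by one at $|A|=2$ (e.g.\ $A=\{0,1\}$ gives exactly three quotients), a defect inherited from how the source theorems are phrased and immaterial to the paper's application where $|A|$ is large.
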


\begin{lemma}
Let $A$ be a subset of an abelian group such that $\E^+(A)=|A|^3/K.$ Then there exists $A'\sbeq A$ such that $|A'|\gg  |A|/K$ and
$$|A'-A'|\ll K^{4}|A'|\,.$$
\label{l:bsg}
\end{lemma}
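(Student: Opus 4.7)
The plan is to establish this as a form of the Balog--Szemer\'edi--Gowers (BSG) theorem, following the Schoen-style argument that achieves the sharp exponent $K^4$. The strategy breaks into three steps: (i) isolate popular differences, (ii) apply a graph-theoretic BSG step to these differences to obtain asymmetric sumset control, (iii) convert the asymmetric bound into the symmetric doubling bound $|A'-A'|\ll K^{4}|A'|$ via Pl\"unnecke--Ruzsa calculus.

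First I would isolate the popular differences. Let $r(t) := |A \cap (A+t)|$, so $\sum_t r(t) = |A|^2$ and $\sum_t r(t)^2 = \E^+(A) = |A|^3/K$. Set $P := \{t : r(t) \ge |A|/(2K)\}$. From the first identity, $|P| \le 2K|A|$. From the second, the contribution of non-popular $t$ to the energy is at most $\tfrac{|A|}{2K}\sum_t r(t) = \tfrac{|A|^3}{2K} = \tfrac12 \E^+(A)$, so popular differences carry at least half of the energy. Using the trivial bound $r(t) \le |A|$, this yields $\sum_{t \in P} r(t) \gg |A|^2/K$, i.e.\ the graph $G$ on vertex set $A$ with edge set $\{\{a,b\}: a-b \in P\}$ has edge density $\gg 1/K$.

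Next I would run a BSG-type graph argument (e.g.\ dependent random choice through a random common neighbour) on $G$ to extract subsets $A_1,A_2 \sbeq A$ with $|A_1|,|A_2| \gg |A|/K$ such that every pair $(a_1,a_2) \in A_1 \times A_2$ has many common neighbours $c$ in $G$. Each such $c$ gives a representation $a_1 - a_2 = (a_1-c) - (a_2-c)$ with both differences lying in $P$, so the number of representations of every element of $A_1 - A_2$ as a difference of elements of $P$ is large; combined with $|P| \le 2K|A|$, this bounds $|A_1-A_2|$ by $K^{O(1)}|A|$ with a small, explicit exponent.

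Finally, Ruzsa's triangle and covering inequalities, together with the Pl\"unnecke--Ruzsa inequality applied to the asymmetric sumset bound, transfer this to a symmetric doubling bound on a single subset $A' \sbeq A$ with $|A'| \gg |A|/K$ and $|A'-A'|\ll K^{4}|A'|$. The principal technical obstacle is achieving the sharp exponent $4$: a loose application of BSG and Pl\"unnecke's inequality yields only $K^{C}$ with a considerably larger $C$. Schoen's improvement requires optimising the common-neighbour step and the Pl\"unnecke--Ruzsa step jointly, carefully tracking how the popular-difference set $P$ interacts with both $A_1$ and $A_2$, so that no extra factor of $K$ is lost in the passage from the edge-density $\gg 1/K$ in $G$ to the final symmetric doubling constant.
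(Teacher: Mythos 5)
The paper does not actually prove Lemma \ref{l:bsg}: it is imported verbatim from Schoen's paper \cite{bsg}, so the only honest comparison is with that work, whose entire content is the sharp pair of exponents (size loss only $K$, doubling $K^4$). Your step (i) is correct: with $P=\{t : r(t)\ge |A|/(2K)\}$ one gets $|P|\le 2K|A|$, $\sum_{t\in P}r(t)^2\ge \E^+(A)/2$ and hence $\sum_{t\in P}r(t)\gg |A|^2/K$. The gap is in steps (ii) and (iii). The all-pairs common-neighbour statement you invoke --- sets $A_1,A_2$ of size $\gg |A|/K$ such that \emph{every} pair of $A_1\times A_2$ has $\gg |A|/K^{O(1)}$ common neighbours in a graph of edge density $1/K$ --- is not a standard lemma at this strength: dependent random choice at density $\delta=1/K$ either loses a power $\delta^{O(1)}$ in the size of the sets, or yields only that \emph{most} pairs have large codegree, which is why the classical BSG arguments pass to paths of length $3$ (at the cost of an extra factor of $|P|$ in the count). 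Moreover, even if one grants your step (ii) in the strongest plausible form (codegree $\gg |A|/K^{2}$ for every pair), the resulting count gives only $|A_1-A_2|\le |P|^2/(|A|/K^2)\ll K^4|A|$, which is already $\approx K^5|A_1|$ in asymmetric form, and symmetrizing via Ruzsa's triangle inequality $|A_1-A_1|\le |A_1-A_2||A_2-A_1|/|A_2|$ inflates this to roughly $K^{9}|A|\ll K^{10}|A_1|$. So the outline, executed as written, proves a BSG statement with an exponent near $10$, not $4$, and does not even reach $|A'-A'|\ll K^4|A'|$ in the asymmetric intermediate step.

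You acknowledge this yourself (``the principal technical obstacle is achieving the sharp exponent $4$''), but deferring exactly that obstacle means the lemma is not proved: the optimisation you allude to is not a routine tightening of the common-neighbour and Pl\"unnecke--Ruzsa steps, it is the main theorem of \cite{bsg}, obtained there by a more delicate argument than the standard popular-difference/graph/Ruzsa-calculus skeleton you describe. As it stands, your proposal is a correct reduction of the problem to its well-known hard core together with a promissory note for that core; for the purposes of this paper the honest route is simply to cite \cite{bsg}, as the authors do.
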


\bigskip
\centerline{\sc 4. Multiplicative structures in sumsets}
\bigskip

The next lemma directly follows from Proposition 4.1 in \cite{Sanders_2A-2A}.

\begin{lemma}
	Suppose that $G = (G,+)$ is a group  and $k \in \mathbb{N}.$ Let $A\subset G$ be a finite non--empty set such that $|A + A| \leq K|A|.$ 
	Then there is  a  set  $X\sbeq A-A$  of size  
	\begin{equation}\label{f:Sanders_lower_bound}
	|X| \geqslant \exp \left(-O(k^{2} \log^2 K) \right)|A|
	\end{equation}
	such that
	$kX\sbeq 2A-2A.$
\label{l:Sanders}
\end{lemma}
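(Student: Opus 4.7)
The plan is to extract the claim as a direct application of Sanders' Proposition~4.1, which is a Croot--Sisask style $L^p$ almost-periodicity statement. In order to get the conclusion about $2A-2A$ rather than about $A-A$ or $kA-kA$, I would apply it to a convolution whose support is exactly $2A-2A$. Concretely, set
\[
r(x):=|\{(a,b)\in A\times A:a-b=x\}|,
\]
so that $r$ is supported on $A-A$ with $r(0)=|A|$, and put $g:=r*r$. Then $g$ is supported on $2A-2A$ and, by Cauchy--Schwarz combined with $|A+A|\le K|A|$, we have $g(0)=\E^+(A)\ge |A|^3/K$.

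Next I would invoke Sanders' Proposition~4.1 for the function $r$ with error parameter $\eta\asymp 1/k$ and integrability exponent $p\asymp \log K$. Because $r$ is a convolution of indicator functions of a set with doubling $K$, the standard output is a symmetric set $X\sbeq A-A$ satisfying
\[
|X|\ge \exp\bigl(-O(p\eta^{-2}\log K)\bigr)|A|=\exp\bigl(-O(k^2\log^2 K)\bigr)|A|
\]
and
\[
|g(x-t)-g(x)|\le \eta\, g(0)\qquad\text{for every }x\text{ and every }t\in X.
\]
Choosing $p\asymp\log K$ is precisely what turns the native $L^p$ closeness produced by the Croot--Sisask argument into a pointwise bound, and it is this step that contributes the extra factor of $\log K$ in the exponent, yielding $\log^2K$ rather than $\log K$.

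Finally, the triangle inequality applied $k$ times gives, for arbitrary $t_1,\dots,t_k\in X$,
\[
|g(x-t_1-\cdots-t_k)-g(x)|\le k\eta\, g(0)\le g(0)/2.
\]
Setting $x=0$ yields $g(t_1+\cdots+t_k)\ge g(0)/2>0$, so $t_1+\cdots+t_k\in \supp(g)=2A-2A$, i.e.\ $kX\sbeq 2A-2A$. The main technical point --- and essentially the only thing to verify when quoting Sanders' proposition --- is that the almost-period set $X$ can be placed inside $A-A$; in the standard proof the almost-periods arise as differences of elements of $A$ that lie in a common level set of a sampled sum, so this comes for free. Everything else is the parameter tuning described above, which is exactly the sense in which the lemma ``directly follows'' from Sanders' proposition.
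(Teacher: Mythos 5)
The paper offers no written proof of this lemma—it simply says the statement ``directly follows from Proposition~4.1 in \cite{Sanders_2A-2A}''—so there is nothing to compare line by line, and your overall architecture (apply Sanders' almost-periodicity with error parameter $\eta\asymp 1/k$ and exponent $m\asymp\log K$, then finish with the triangle inequality over $k$-fold sums and evaluate at $x=0$) is the natural one and is consistent with the exponent $\exp(-O(k^2\log^2K))$.

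However, there is a real gap in the way you invoke Sanders. You assert that the ``standard output'' of Proposition~4.1 applied to $r=\mathbf{1}_A*\mathbf{1}_{-A}$, with $m\asymp\log K$, is the \emph{pointwise} estimate $|g(x-t)-g(x)|\le\eta\, g(0)$ for $g=r*r$, where $g(0)=\E^+(A)$. That is not what the proposition gives, and the normalization matters. What the Croot--Sisask/Sanders argument produces natively is an $L^{2m}$ bound of the form $\|\tau_t r-r\|_{L^{2m}}\le \eta\,|A|^{1+1/(2m)}$ for $t\in X$. Converting this to a pointwise bound on $g=r*r$ by H\"older gives
$$|g(x-t)-g(x)|\le \|\tau_t r-r\|_{L^{2m}}\,\|r\|_{L^{(2m)'}}\le \eta\,|A|^{1+1/(2m)}\cdot |A|^{2-1/(2m)}=\eta\,|A|^{3},$$
whereas the quantity you want to beat is $g(0)=\E^+(A)$, which can be as small as $|A|^3/K$. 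So your chain of inequalities only closes if $\eta\lesssim 1/(kK)$, and plugging that back into Sanders' density bound yields $|X|\ge\exp(-O(mk^2K^2\log K))|A|$, which is exponentially (not polylogarithmically) small in $K$ --- far weaker than \eqref{f:Sanders_lower_bound}. The sentence ``choosing $p\asymp\log K$ turns the $L^p$ closeness into a pointwise bound'' is the place where this loss is silently discarded; taking $m$ large does not rescue the comparison against $g(0)=\E^+(A)$, because the exponent $1/(2m)$ only shaves off a $K^{O(1/m)}$ factor, not the full factor of $K$.

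The fix is to choose the base function so that its value at $0$ is on the \emph{same} scale as the trivial H\"older bound, i.e.\ without the hidden $\E^+(A)/|A|^3\ge 1/K$ loss. Concretely, instead of $g=r*r$, one should work with a triple convolution such as $h=\mathbf{1}_{A-A}*\mathbf{1}_A*\mathbf{1}_{-A}$, which is still supported on $2A-2A$ but satisfies $h(0)=|A|^2$ exactly; Sanders' Proposition~4.1 is formulated precisely for such triple convolutions and delivers a $\sup$-norm almost-period set whose density involves an extra $\log$-factor coming from a density parameter of the inner convolution, which is where the second $\log K$ in the exponent actually originates. So the parameter choices you make are reasonable, and the triangle-inequality step at the end is fine, but the specific pointwise statement you attribute to Proposition~4.1 (relative to $g(0)=\E^+(A)$) is wrong, and following your plan literally would not produce the bound \eqref{f:Sanders_lower_bound}.
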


\bigskip

\begin{lemma}\label{l:bukh}
	Let  $A$ be a finite subset of  an abelian group such that $|A+A|\le K|A|$. Then  for any  $\lambda_{i} \in \mathbb{Z} \backslash\{0\}$ we have
	$$
	\left|\lambda_{1} \cdot A+\cdots+\lambda_{k} \cdot A\right| \leqslant K^{O\left(\Sigma_{i} \log \left(1+\left|\lambda_{i}\right|\right)\right)}|A| \,.
	$$
\end{lemma}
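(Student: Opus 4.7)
The plan is to first show $|A + \lambda \cdot A| \le K^{c \log(1+|\lambda|)} |A|$ for every integer $\lambda$ and some absolute constant $c$, by induction on $|\lambda|$, and then combine the single-dilate bounds via a multi-set Plünnecke-Ruzsa inequality. The base case $|\lambda| \le 1$ is immediate from the hypothesis $|A + A| \le K|A|$.

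For the inductive step with $|\lambda| \ge 2$, I would write $\lambda = 2\mu + r$ with $r \in \{-1, 0, 1\}$ and $|\mu| < |\lambda|$, and use the containment $\lambda \cdot A \subseteq 2(\mu \cdot A) + r \cdot A$. Setting $B := \mu \cdot A$, the crucial observation is that $B$ inherits the doubling of $A$: $|B + B| = |\mu \cdot (A+A)| = |A + A| \le K|B|$. In the even case $r = 0$, applying the two-set Plünnecke-Ruzsa inequality $|X_1 + X_2| \cdot |Y| \le |Y + X_1| \cdot |Y + X_2|$ pivoted at $Y = B$, with $X_1 = A$ and $X_2 = 2B$, gives
\[
|A + 2B| \le \frac{|A + B| \cdot |3B|}{|B|} \le K^{c \log(1+|\mu|) + O(1)} |A|,
\]
using the inductive hypothesis and the bound $|3B| \le K^{O(1)} |B|$ from Plünnecke (since $B$ has doubling $\le K$). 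In the odd case $r = \pm 1$, the containment becomes $A + \lambda \cdot A \subseteq 2A + 2B$ (up to signs absorbed as a difference on one side), and Plünnecke-Ruzsa now pivoted at $A$ gives
\[
|2A + 2B| \le \frac{|3A| \cdot |A + 2B|}{|A|} \le K^{c \log(1+|\mu|) + O(1)} |A|.
\]
Either case yields the recurrence $c \log(1 + |\lambda|) \le c \log(1 + |\mu|) + O(1)$, which is consistent for $c$ a sufficiently large absolute constant.

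For the main statement, I would apply the multi-set Plünnecke-Ruzsa inequality
\[
|A + B_1 + \cdots + B_k| \le \prod_{i=1}^k \frac{|A + B_i|}{|A|} \cdot |A|
\]
with $B_i = \lambda_i \cdot A$, yielding
\[
|\lambda_1 \cdot A + \cdots + \lambda_k \cdot A| \le \prod_{i=1}^k K^{c \log(1+|\lambda_i|)} \cdot |A| = K^{O(\sum_i \log(1+|\lambda_i|))} |A|.
\]

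The main obstacle is to pivot the two-set Plünnecke-Ruzsa at the right set so as to exploit the inherited small doubling of $B = \mu \cdot A$. Pivoting instead at $A$ in the even case would yield the recurrence $c \log(1+2|\mu|) \le 2 c \log(1+|\mu|) + O(1)$, producing only a bound exponential in $\log|\lambda|$. Pivoting at $B$, whose doubling is at most $K$, saves the factor of $K^{c \log(1+|\mu|)}$ per halving of $|\lambda|$ and produces the required logarithmic dependence.
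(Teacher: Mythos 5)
The paper states this lemma without proof; the label \texttt{l:bukh} and the bibliography entry point to Bukh's \emph{Sums of dilates}, from which it is quoted. Your argument is correct and is essentially the same as Bukh's original one: write $\lambda$ in binary, induct by halving $\lambda = 2\mu + r$, and at each step apply Pl\"unnecke--Ruzsa pivoted at the dilated set $B = \mu\cdot A$, whose key feature is that it inherits the doubling constant $K$; this is exactly what turns the trivial exponential-in-$\lambda$ recurrence into a logarithmic one. Two small points are worth flagging. First, the multi-set Pl\"unnecke--Ruzsa inequality you quote at the end is stated with $A$ added on the left-hand side, $|A+B_1+\cdots+B_k|\le |A|\prod_i |A+B_i|/|A|$; the standard theorem (proved by Pl\"unnecke's graph method, or by iterating Petridis) gives only $|B_1+\cdots+B_k|\le |A|\prod_i |A+B_i|/|A|$, without the extra $A$. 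That weaker form is exactly what you need here, since $\lambda_1\cdot A+\cdots+\lambda_k\cdot A = B_1+\cdots+B_k$, so the argument goes through unchanged; but you should not assert the stronger form. Second, the step $|B+B| = |\mu\cdot(A+A)| = |A+A|$ uses that multiplication by $\mu$ is injective, which is automatic over $\mathbb{Z}$ (Bukh's setting) and over $\F_p$ when $p\nmid\mu$, as holds in this paper's application where $|\lambda_i|<p$, but is not literally true in an arbitrary abelian group; this is a hypothesis implicitly carried over from the source and is harmless here.
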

 
The next result is an important ingredient of our approach,  which shows that additively rich sets  contain product of surprisingly large sets. 
	Our argument is based on  \cite{CRS}.

\begin{lemma}
	Let $A\subseteq \F_p$ be a set such that  $|A+A| \le K|A|$.
	There exists an absolute constant $C >0$
	 such that for any positive integers  $d\ge 2$ and $l$ there is 
	a set $Z$
of size  $|Z| \ge   \exp \left(-C l^{3} d^2 \log^2 K \right) |A|$ with
\begin{equation}\label{f:mult_inclusion_3}	
[d^l] \cdot Z \subseteq 2A-2A \,.
\end{equation}
\label{l:mult_inclusion}
\end{lemma}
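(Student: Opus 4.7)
The plan is to combine Lemma \ref{l:Sanders} with a base-$d$ expansion of integers in $[d^l]$. Every $n \in [d^l]$ admits a representation $n = \sum_{i=0}^{l-1} c_i d^i$ with digits $c_i \in \{0, 1, \ldots, d-1\}$ and digit sum $\sum_i c_i \le l(d-1)$ (the boundary case $n = d^l$ is absorbed by a trivial adjustment), so for any $z \in \F_p$,
\[
n z = \sum_{i=0}^{l-1} c_i (d^i z).
\]
Thus, if we can find a set $Z$ and an auxiliary set $X$ such that $d^i z \in X$ for every $z \in Z$ and every $0 \le i \le l-1$, and moreover $l(d-1) \cdot X \sbeq 2A - 2A$ in the sumset sense, then $nz$ is a sum of at most $l(d-1)$ elements of $X$ and hence lies in $2A - 2A$.

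The auxiliary set $X$ is supplied by Lemma \ref{l:Sanders} applied to $A$ with $k = l(d-1)$: it gives $X \sbeq A - A$ of size at least $\exp(-C_1 l^2 d^2 \log^2 K)|A|$ with $kX \sbeq 2A - 2A$. Since $0 \in A - A$, we may harmlessly assume $0 \in X$, so that the element-wise dilates $j \cdot X$ with $j \le k$ are all contained in $kX \sbeq 2A - 2A$.

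The heart of the argument is then to produce a large $Z$ inside the intersection $\bigcap_{i=0}^{l-1} d^{-i} X$. A naive intersection can be empty since $|X|$ is much smaller than $p$, so instead one proceeds iteratively: extract, one dilation at a time, a subset of $X$ that is nearly closed under multiplication by $d$, at the cost of an exponential factor $\exp(-O(l^2 d^2 \log^2 K))$ per step. Following the approach of \cite{CRS}, accumulating $l$ such refinements yields $|Z| \ge \exp(-C l^3 d^2 \log^2 K)|A|$; the extra factor of $l$ in the exponent (compared to a single application of Sanders' lemma) is exactly the cost of enforcing the $l$ simultaneous dilation constraints.

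The main obstacle is this iterative refinement: each new constraint $d^i z \in X$ threatens to eliminate all surviving elements. Overcoming this requires exploiting finer structural information within $X$ (rooted in the Bohr-type arguments underlying Lemma \ref{l:Sanders}), which guarantees that each successive refinement preserves a substantial fraction of the set rather than collapsing it; Lemma \ref{l:bukh} is likely invoked along the way to bound the doubling of intermediate $d$-dilated sumsets, keeping the losses under control.
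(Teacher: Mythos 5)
Your framework is right (base-$d$ expansion plus Lemma \ref{l:Sanders}), but the crucial construction of $Z$ is missing, and the route you propose for it does not match the paper and has a genuine gap. You want to find a large $Z$ with $d^i z \in X$ for all $0 \le i \le l-1$, and you suggest doing this by ``iterative refinement, extracting a subset of $X$ nearly closed under multiplication by $d$, one dilation at a time.'' No such mechanism is available: $X$ from Sanders' lemma is a piece of $A-A$ with no a priori multiplicative closure property, and there is no reason a refinement $\{z \in X : dz \in X\}$ retains a constant fraction of $X$ (it can easily be empty). Invoking ``finer structural information rooted in the Bohr-type arguments underlying Lemma \ref{l:Sanders}'' is not an argument; the lemma is used as a black box and its proof is not re-opened.

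The paper instead produces $Z$ by a pigeonhole argument that you do not have. It considers the map $f : X^{l+1} \to (X+X)\times(X+d\cdot X)\times\cdots\times(X+d^{l-1}\cdot X)$ given by $f(x_0,\dots,x_l) = (x_1+x_0,\, x_2+dx_0,\,\dots,\,x_l+d^{l-1}x_0)$, uses Lemma \ref{l:bukh} together with Pl\"unnecke to bound the image by $\exp(O(l^2\log d\log K))|A|^l$, and finds a fiber $\mathbf{Y}$ of size at least $|X|^{l+1}$ divided by that bound. Fixing $\overline{x}\in\mathbf{Y}$, every $\overline{y}\in\mathbf{Y}$ gives $d^j(x_0-y_0)=x_j-y_j\in X-X$ for all $j\in[0,l-1]$, and distinct $\overline{y}$ give distinct $y_0$; so $Z=\{x_0-y_0:\overline{y}\in\mathbf{Y}\}$ works. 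Note the target is $d^j z\in X-X$, not $d^j z\in X$ as in your plan, which is why the paper applies Lemma \ref{l:Sanders} with $k=2(d(l-1)+1)$ rather than $k=l(d-1)$: one needs $(l(d-1)+1)(X-X)\sbeq 2A-2A$. Also, Lemma \ref{l:bukh} is used to bound the size of the image of $f$ (i.e.\ $|X+d^i\cdot X|$), not to control doubling of intermediate refinements as you guessed. Without the fiber/pigeonhole idea your proof does not close.
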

\begin{proof} 
 Let $l$ and $d\ge 2$ be positive integers
and  $X$ be a set retrieved by applying Lemma \ref{l:Sanders} to  set $A$ and  $k=2(d(l-1)+1)$, so $|X|\ge \exp{(-Cl^2d^2
\log^2 K)}|A|$. 
	We define a  map 
	$$f: X^{l+1} \to  (X+X) \times (X+d\cdot X) \times \dots \times (X+d^{l-1}\cdot X)$$
 by  
	$$
	 f(\x) = (x_1+x_0,x_2+dx_0, \dots, x_{l}+d^{l-1}x_0) \,,
	$$
	where $\x=(x_0,x_1, \dots, x_{l}).$
	Let $\mathcal{X}$ be  the image of $f$ then by Lemma  \ref{l:bukh} and the Pl\"unnecke inequality \cite{TV}  we have
	\begin{eqnarray*}
	|\mathcal{X}|&\le& |X+X||X+d\cdot X|\cdots|X+d^{l-1}\cdot X|\\
	&\le &|(A-A)+(A-A)||(A-A)+d\cdot (A-A)|\cdots|(A-A)+d^{l-1}\cdot (A-A)|\\
	&\le& \exp{(O(l^2\log d\log  K))}|A-A|^l\\
	&=&\exp{(O(l^2\log d\log K))}|A|^l\,.
	\end{eqnarray*}
	For a vector ${\z}\in \mathcal{X}$ put 
	$$r({\z})=|\{{\x}\in X^{l+1}: f({\x})={\z}\}| \,.$$
	Clearly, 
	$$\sum_{{\z}\in \mathcal{X}} r({\z}) = |X|^{l+1}\,,$$
	so there is a ${\v}$ such that 
	\begin{eqnarray*}
	r({\v}) \ge |X|^{l+1}/|\mathcal{X}|&\ge& \exp{(-O(l^2\log d\log K))}|X|^{l+1}|A|^{-l}\\
	&\ge& \exp{(-O(l^3d^2\log^2 K))}|A|\,.
	\end{eqnarray*}
	Put ${\mathbf Y}=\{\x\in X^{l+1}: f(\x)={\v}\}$ and let $\x\in {\mathbf Y}$ be a fixed element.
	Then for any ${\y}\in {\mathbf Y}$  and any $j\in [0,l-1]$  we have  
	\[
		 d^j(x_0-y_0) = x_j - y_j  \in X-X \,.
	\]
	Observe that for distinct  vectors ${\y},{\y'}\in {\mathbf Y}$ the  corresponding coefficients $y_0$ and $y_0'$ must  also be distinct and hence  elements $x_0-y_0$ with $\y\in {\mathbf Y}$ form a set of size at least $\exp{(-O(l^3d^2\log^2 K))}|A|$. Denote this set by $Z$, then $d^j\cdot Z\sbeq X-X$ for $j\in [0,l-1]$, so
	$$[d^l] \cdot Z \subseteq (l(d-1) + 1) (X-X) = 2 (l(d-1)+1) X\sbeq 2A-2A\,,$$
which concludes the proof.
$\hfill\Box$
\end{proof}

\bigskip 

\centerline{\sc 4. Proof of the main results}

\bigskip 

Now we are ready to prove Theorem \ref{main_theorem}.

\bigskip

\begin{proof} 
Let $l$ and  $r\ge 2$ be parameters that will be specified later. 
By applying Lemma \ref{l:mult_inclusion}  to  the set $A,\, l$ and $d=2$ there is 
a set $Y$ with 
$|A| \ge |Y| \ge   \exp \left(-C l^{3} \log^2 K \right) |A|$
such that  $I\cdot Y\sbeq 2A-2A$, where $I:=[2^l]$. 
For fixed $x\in  I$, $y\in Y$ we have 
\begin{equation}\label{tmp:10.09_1-}
\Big|\sum_{a \in A, b \in B} \chi(a+b)\Big| \leqslant \sum_{a \in A}\Big|\sum_{b \in B} \chi(a+b)\Big|
= \sum_{a \in A-x y}\Big|\sum_{b \in B} \chi(a+b+x y)\Big| \,,
\end{equation}
so
\begin{equation}\label{tmp:10.09_1}
	\Big|\sum_{a \in A, b \in B} \chi(a+b)\Big|^{4r} \le (|I||Y|)^{-4r}\Big( \sum_{a \in 3A-2A}\, \sum_{x\in I,\, y\in Y} \Big|\sum_{b \in B} \chi(a+b+x y)\Big| \Big)^{4r} \,.
\end{equation}
For $a\in 3A-2A$ we put $B_a := B+a$ and 
$$
	\sigma:= \sigma_a = \sum_{x \in I, y \in Y}\Big|\sum_{b \in B} \chi(a+b+x y)\Big|=\sum_{x \in I, y \in Y}\Big|\sum_{b \in B_{a}} \chi(b+x y)\Big| \,.
$$ 
After a pplying the Cauchy--Schwarz inequality we obtain 
\begin{eqnarray*}
	\sigma^2 &=& \Big( \sum_{x \in I, y \in Y} \Big|\sum_{b \in B_{a}} \chi(b+x y) \Big| \Big)^2\\
	&\le&
		|I| |Y| \Big( \sum_{b,b' \in B_a}\,  \sum_{x \in I, y \in Y} \chi(b+xy) \overline{\chi} (b'+xy) \Big) \\
		&=&
		|I| |Y| \sum_{u_1,u_2} \nu(u_1,u_2)  \sum_{x \in I} \chi(u_1+x) \overline{\chi} (u_2+x) \,,
\end{eqnarray*}
where 
$$\nu(u_1,u_2) = |\{ (b,b',z) \in B^2_a \times Y ~:~ b/y = u_1, b'/y = u_2 \}|.$$ 
From the H\"{o}lder inequality we have
\[
	\sigma^{4r}	\le (|I||Y|)^{2r} \Big(\sum_{u_1,u_2} \nu(u_1,u_2) \Big)^{2r-2} \Big( \sum_{u_1,u_2} \nu^2 (u_1,u_2) \Big)
	\Big(\sum_{u_1,u_2} \Big|\sum_{x \in I} \chi(u_1+x) \overline{\chi} (u_2+x) \Big|^{2r} \Big) \,.
\]
Since $\sum_{u_1,u_2} \nu(u_1,u_2) =|B|^2|Y|$ and $\sum_{u_1,u_2} \nu^2 (u_1,u_2)$ is the number of solutions to \eqref{system} it follows from  Lemma \ref{l:davenport} and Lemma \ref{l:E_3} that
\begin{equation}\label{tmp:17.09_1}
	\sigma^{4r}	\ll (|I||Y|)^{2r} (|B|^2 |Y|)^{2r-2} (K_Y^{5/4} L^{5/2} |Y||B|^2 \log p + |Y|^2 |B|) (p^2 |I|^r r^{2r}+4r^2p |I|^{2r}) \,,
\end{equation}
where $K_Y = |Y+Y|/|Y| \le K|A|/|Y|$. We assume  that  
$$K_Y^{5/4} L^{5/2} |Y||B|^2 \log p \ge |Y|^2 |B|\,.$$
Furthermore, suppose that $l$ satisfies the inequality
$|I|^r = 2^{lr} \ge r^{2r} p,$
then
 \[
	\sigma^{4r} \ll 
	r^2
	p |I|^{2r} (|I||Y|)^{2r} (|B|^2 |Y|)^{2r-2}  L^{5/2} K^{5/4} |A|^{5/4} |B|^2 |Y|^{-1/4} \log p \,.
\]
Now we go back  to bound  \eqref{tmp:10.09_1}. By the Pl\"unnecke inequality \cite{TV} we have $|3A-2A|\le K^{5} |A|$, hence
\begin{eqnarray*}
	\Big|\sum_{a \in A, b \in B} \chi(a+b)\Big|^{4r} 
		&\ll& 
			K^{5r} |A|^{4r} r^{2} p |I|^{2r} (|I||Y|)^{-2r} (|B|^2 |Y|)^{2r-2}   L^{5/2} K^{5/4} |A|^{5/4} |B|^2 |Y|^{-1/4} \log p \\
&	= &
	r^2
	(|A| |B|)^{4r} \cdot \Big( \frac{K^{5r+5/4} L^{5/4}  |A|^{5/4} p\log p}{|Y|^{9/4} |B|^2}\Big) \,.
\end{eqnarray*}
	Recall that 
	$|Y| \ge   \exp \left(-C l^{3} d^2 \log^2 K \right) |A|$, so
\[
\Big|\sum_{a \in A, b \in B} \chi(a+b)\Big|^{4r} 
\ll
	r^2
	(|A| |B|)^{4r} \cdot \Big( \frac{K^{C_1 (l^{3} \log K + r)} L^{5/4} p\log p}{|A| |B|^2}\Big) 
\]
	for an absolute constant $C_1>0.$  Now we choose parameters for the above inequality. 
	By the assumption $L<p^{\d/2}$ and $|A||B|^2>p^{1+\d}$ hence  $L^{5/4} p(\log p) (|A| |B|^2)^{-1} < p^{-\d/4}$.
	We put
	$$l =\Theta( (\d \log p /\log^2 K)^{1/3})$$
	and 
$$r =\Theta( (\log p)/l)=\Theta(  \d^{-1/3} \log^{2/3} K \cdot \log^{2/3} p)$$
such that the inequalities $K^{C_1 l^3 \log K} < p^{\d/16}$ and $2^{lr} \ge r^{2r} p$ are satisfied. 
	Furthermore, by the assumption it follows that   $K^{5r} < p^{\d/16}$.
	Thus, we have 
\begin{equation}\label{tmp:17.09_2}
	\Big|\sum_{a \in A, b \in B} \chi(a+b)\Big|
		\ll 
		 2^{-c\d l /64}	|A||B|  
			\ll  
		\exp (-c'(\d^4 \log p /\log^2 K)^{1/3} )	|A||B| 
\end{equation}
for some absolute constants $c,c'>0.$

To complete the proof assume now that  $|Y|^2 |B|>K_Y^{5/4} L^{5/2} |Y||B|^2 \log p$. Then the same argument leads to the estimate
\[
\Big|\sum_{a \in A, b \in B} \chi(a+b)\Big|^{4r} 
\ll
	r^2
	(|A| |B|)^{4r} \cdot \Big( \frac{K^{5r}p}{|B|^3}\Big)\,,
\]
hence the required inequality holds provided that $|B| >p^{1/3+\d}$,
which completes the proof.
$\hfill\Box$
\end{proof}

\bigskip 

 Using the same argument and the estimate \eqref{f:bound_E^t_3-1} one can avoid any constraints for  doubling   and  size of $B$.

\begin{theorem}\label{t4}
	\label{f:bound_E^t_3'}
	Let $A,\,B\subset\mathbb{F}_p$  and $K,\delta > 0$ be such that  $|A| > p^\delta,$ 
	\begin{equation*}
	|A|^2 |B|^3 >p^{2+\delta}, \quad \quad \mbox{ and } \quad \quad 
	|A+A|< K|A| \,.
	\end{equation*}
	Then for any nontrivial multiplicative character $\chi$ modulo $p$ one has
	\begin{equation}\label{f:main_theorem_B}
	\Big|\sum_{a\in A,\, b\in B} \chi(a+b)\Big| \ll_\d\exp (-c(\d^4 \log p /\log^2 K)^{1/3} )|A||B| \,,
	\end{equation}
	provided that $\log^5 K \ll \d^{4} \log p \,.$
\end{theorem}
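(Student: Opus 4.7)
The plan is to mimic the proof of Theorem \ref{main_theorem} essentially verbatim, replacing only the $E_3$-type estimate (\ref{f:bound_E^t_3}) with (\ref{f:bound_E^t_3-1}) at the appropriate step. The second bound in Corollary \ref{l:E_3} is insensitive to the additive structure of $B$: no $L$ factor appears and no a priori bound on the doubling of $B$ is required, which is precisely why the constraints on $B$ can be dropped from the hypothesis. The price one pays is a factor $|B|^{5/2}$ in place of $|B|^2$, and this is exactly what the stronger condition $|A|^2 |B|^3 > p^{2+\delta}$ (rather than $|A| |B|^2 > p^{1+\delta}$) is designed to absorb.

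More concretely, I would apply Lemma \ref{l:mult_inclusion} with $d = 2$ and a parameter $l$ to extract $Y \subseteq A-A$ of size $|Y| \ge \exp(-Cl^3 \log^2 K)|A|$ with $[2^l] \cdot Y \subseteq 2A-2A$. Setting $I = [2^l]$ and averaging the shift $a \mapsto a - xy$ over $x \in I$, $y \in Y$ as in the derivation of (\ref{tmp:10.09_1}), then raising the sum to the $4r$-th power, applying Cauchy--Schwarz in $(x,y)$ followed by H\"older, and invoking Lemma \ref{l:davenport} for the character sum over $I$, the bound reduces to an estimate for $\sum_{u_1,u_2}\nu^2(u_1,u_2)$, i.e.\ for the number of solutions to the system (\ref{system}) with $A$ replaced by $Y$ and $B$ replaced by the shifted set $B_a = B + a$.

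At this point I would invoke (\ref{f:bound_E^t_3-1}) with $A \leftarrow Y$, $B \leftarrow B_a$, and doubling $K_Y = |Y+Y|/|Y| \le K|A|/|Y|$, bounding the count by $O(K_Y^{3/2} |Y| |B|^{5/2})$. Carrying this through the remainder of the argument, the quantity one needs to push below $p^{-c\delta}$ becomes
\[
\frac{K^{O(r)} \exp(O(l^3 \log^2 K)) \, p}{|A|\,|B|^{3/2}},
\]
and since $|A|^2 |B|^3 > p^{2+\delta}$ means $|A|\,|B|^{3/2} > p^{1+\delta/2}$, the same parameter choices $l = \Theta((\delta \log p / \log^2 K)^{1/3})$ and $r = \Theta(\log p / l)$ used in Theorem \ref{main_theorem}, together with $\log^5 K \ll \delta^4 \log p$, produce the claimed saving.

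The one step that genuinely demands attention is the side hypothesis $K_Y \le |B|$ required to apply (\ref{f:bound_E^t_3-1}). With the chosen parameters one has $K_Y \le K\exp(O(l^3 \log^2 K)) = p^{O(\delta)}$, while $|A|^2 |B|^3 > p^{2+\delta}$ combined with the standing assumption $|A| < \sqrt{p}$ of Corollary \ref{l:E_3} forces $|B| > p^{1/3+\delta/3}$, which is comfortably larger. The boundary regimes $|A| \ge \sqrt p$ or $|B| \ge \sqrt p$ lie outside the scope of Corollary \ref{l:E_3}, but in those ranges either Karatsuba's bound (\ref{karacuba}) or a direct application of Weil already delivers savings of the required strength, so they can be disposed of separately.
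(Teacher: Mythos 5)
Your proposal follows the paper's own proof essentially verbatim: reduce to the case $|B|\le p^{1/2+\delta/5}$ via Karatsuba's bound \eqref{karacuba}, observe that the hypothesis then forces $|B|$ large and the relevant doubling small so that the second bound \eqref{f:bound_E^t_3-1} of Corollary~\ref{l:E_3} is applicable, and rerun the Theorem~\ref{main_theorem} machinery with $|B|^{5/2}$ in place of $L^{5/2}|B|^2$, using $|A|\,|B|^{3/2}>p^{1+\delta/2}$ to close the estimate with the same choice of $l$ and $r$. If anything you are marginally more careful than the paper, which verifies only $K\le|B|$ where the actual side condition for applying \eqref{f:bound_E^t_3-1} to $(Y,B_a)$ is $K_Y\le|B|$, and which does not comment on the boundary regime $|A|\ge\sqrt p$ at all; your handling of both points is reasonable, though the remark that Weil alone disposes of $\sqrt p\le|A|\le p^{1/2+\delta}$ or $\sqrt p\le|B|\le p^{1/2+\delta/5}$ is a little quick (one would typically pass to a subset of size just under $\sqrt p$ rather than invoke Weil directly) — but this is a lacuna the paper shares.
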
	
\begin{proof} If $|B|>p^{1/2+\delta/5}$ then we use   Karacuba's result  \eqref{karacuba} hence we may  assume that $|B|\le p^{1/2+\delta/5}$. Then $|A|>p^{1/4+\delta/5}$ so
$K\le |B|$ 
and we can apply \eqref{f:bound_E^t_3'}.  Closely following the proof of Theorem \ref{main_theorem} and using \eqref{f:bound_E^t_3'} we obtain the estimate
\begin{eqnarray*}
	\Big|\sum_{a \in A, b \in B} \chi(a+b)\Big|^{4r} 
		&\ll& 
			K^{5r} |A|^{4r} r^{2} p |I|^{2r} (|I||Y|)^{-2r} (|B|^2 |Y|)^{2r-2}  K^{3/2} |A|^{3/2} |B|^{5/2} |Y|^{-1/2} \log p \\
&	= &5
	r^2
	(|A| |B|)^{4r} \cdot \Big( \frac{K^{C(l^3\log K+r)}   p\log p}{|A| |B|^{3/2}}\Big) \,.
\end{eqnarray*}
The  choice of parameters  $l,r$ from Theorem \ref{main_theorem} provides the required upper bound. $\hfill\Box$
\end{proof}



\bigskip

\centerline{\sc Proof of  Theorem \ref{f:bound_E^t_3'} }

\bigskip

\begin{proof}
	Put $|A| = \frac{\sqrt{p}}{M}$, where  $1\le M\le \exp(c\log^{1/5} p),$ and let
	$Q = \frac{A-A}{A-A}$.
	By Lemma \ref{l:Redei} we have $|Q| \ge p/(2M^2)$.   
	Suppose  that $\frac{2A-2A}{A-A} \not= \F_p$ and let
	  $\xi \notin \frac{2A-2A}{A-A}$. Equivalently, if 
		\begin{equation}\label{eq}
		\xi(a_1-a_2)=a_3+a_4-a_5-a_6\,,
		\end{equation}
		$a_i\in A$ then $a_1=a_2$ and
	$a_3+a_4-a_5-a_6=0,$ so there are 	$|A| \E^{+} (A) $ solutions to this equation.
	In terms of Fourier transform the number of solutions to \eqref{eq} can be written as 
\[
	|A| \E^{+} (A) = p^{-1} \sum_x |\FF{A}(\xi x)|^2 |\FF{A}(x)|^4 \ge \frac{|A|^6}{p} = \frac{|A|^4}{M^2} \,.
\]
	Hence 
	$$\E^{+} (A) \ge |A|^3/M^2$$
	and by Lemma \ref{l:bsg} there is $A'\subseteq A$ such that $|A'|\gg |A|/M$ and $|A'+A'|\ll M^4|A'|$. 
	Using multiplicative Fourier coefficients,  the number of representations of any $z\in \F^*_p$ in the form $q/q'(a-b)$ with $q,q'\in Q$ and $a,b\in A$ equals 
$$\frac{1}{p-1} \sum_{\chi} |\sum_{x\in Q} \chi (x)|^2 \Big( \sum_{a,b \in A'} \chi (z(a-b)) \Big)\,. $$
By Theorem \ref{main_theorem} there are positive constants $c,C$ such that  the above quantity can be bounded from below by
$$(2p)^{-1} |Q|^2 |A'|^2  - C\exp (-c(\log p /\log^2 M)^{1/3} )|Q| |A'|^2\,,$$
which is greater than
\begin{equation}\label{tmp:14.09_1} 
(4M^2)^{-1}|Q||A'|^2 - C\exp (-c(\log p /\log^2 M)^{1/3} )|Q| |A'|^2 \,,
\end{equation}  
for a positive constant $c.$
	If  \eqref{tmp:14.09_1} is positive for every $z$, then  $QQ(A-A)=\F_p$. 
However, our assumption on the size of $A$ implies that
\[
	C\exp (c(\log p /\log^2 M)^{1/3} ) > 4M^2\,,
\]  
which concludes the proof.
$\hfill\Box$
\end{proof}

\begin{corollary}
	There is a positive constant $c$ such that for every set $A\subseteq \F_p$ with $|A| \gg \exp(- c\log^{1/5} p) p^{1/2} $ we have 
	\[
	\frac{(2A-2A)^3}{(2A-2A)^2} = \F_p  \,.
	\]
\label{c:Balog_new}
\end{corollary}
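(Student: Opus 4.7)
The plan is to deduce Corollary~\ref{c:Balog_new} directly from Theorem~\ref{t:Balog_new} by elementary set-algebra. Write $S:=2A-2A$; since $S^3/S^2\subseteq \F_p$ is automatic, it suffices to verify the reverse inclusion in each of the two cases provided by Theorem~\ref{t:Balog_new}.

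Suppose first that $(2A-2A)/(A-A)=\F_p$. Since $A-A\subseteq S$, enlarging the denominator only enlarges the quotient, so $S/S\supseteq S/(A-A)=\F_p$; that is, $S/S=\F_p$. I would then upgrade this to $S^2/S=\F_p$: fix any nonzero $s_0\in S$ and, given $z\in\F_p$, write $z/s_0=u/v$ with $u,v\in S$ (possible since $S/S=\F_p$), so that $z=(s_0 u)/v\in S^2/S$. Finally, $S^3/S^2\supseteq S^2/S$ because equating one numerator factor with one denominator factor in $s_1 s_2 s_3/(s_4 s_5)$ (e.g.\ taking $s_5=s_3$) produces an arbitrary element $s_1 s_2/s_4$ of $S^2/S$. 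Hence $S^3/S^2=\F_p$ in this case.

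In the second case, $((A-A)/(A-A))^2(A-A)=\F_p$, which one rewrites directly as $(A-A)^3/(A-A)^2=\F_p$ by multiplying out the quotients. Since $A-A\subseteq S$, every ratio of three elements of $A-A$ divided by two elements of $A-A$ is a fortiori an element of $S^3/S^2$, so $\F_p=(A-A)^3/(A-A)^2\subseteq S^3/S^2$, finishing the proof in this case as well.

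I do not expect a real obstacle here, as the corollary follows essentially by bookkeeping once Theorem~\ref{t:Balog_new} is in hand. The only delicate point is that naive cancellations such as $X\cdot Y/Y=X$ fail for sets, so every inclusion above must be certified by exhibiting an explicit quotient representation, which is exactly the role of the auxiliary element $s_0$ in Case~1.
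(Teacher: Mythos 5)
Your proof is correct and is essentially the deduction the paper has in mind: the corollary is stated without a separate argument, immediately after Theorem~\ref{t:Balog_new}, precisely because it is meant to follow by the kind of set-algebra bookkeeping you carry out, using $A-A\subseteq 2A-2A$ in both cases. Your handling of the nonzero denominator via the auxiliary $s_0$ (and the choice $s_5=s_3$ with $s_3\neq 0$) is the right way to certify the inclusions; one could shorten Case~1 slightly by writing $x/y=(xy^2)/y^2\in S^3/S^2$ directly for $x\in S$, $y\in (A-A)\setminus\{0\}$, but this is only a cosmetic saving.
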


\bigskip

\noindent {Faculty of Mathematics and Computer Science,\\ Adam Mickiewicz
University,\\ Umul\-towska 87, 61-614 Pozna\'n, Poland\\} {\tt
schoen@amu.edu.pl}

\bigskip

\noindent{Steklov Mathematical Institute,\\
ul. Gubkina, 8, Moscow, Russia, 119991}
\\
and
\\
IITP RAS,  \\
Bolshoy Karetny per. 19, Moscow, Russia, 127994\\
and 
\\
MIPT, \\ 
Institutskii per. 9, Dolgoprudnii, Russia, 141701\\
{\tt ilya.shkredov@gmail.com}

\end{document}